\DeclareMathOperator{\PG}{PG}
\newtheorem{definition}{Definition}[section]
\newtheorem{theorem}[definition]{Theorem}
\newtheorem{lemma}[definition]{Lemma}
\newtheorem{gevolg}[definition]{Corollary}
\newtheorem{result}[definition]{Result}
\newtheorem{remark}[definition]{Remark}
\newtheorem{proposition}[definition]{Proposition}
\def\F{\mathbb{F}}
\def\C{\mathcal{C}}
\def\S{\mathcal{S}}
\def\P{\mathcal{P}}
\def\D{\mathcal{D}}
\def\Q{\mathcal{Q}}
\def\A{\mathcal{A}}
\def\PGL{\mathrm{PGL}}
\title{Translation hyperovals and $\F_2$-linear sets \\of pseudoregulus type}
\author{Jozefien D'haeseleer \thanks{This author is a PhD fellow of FWO (Research foundation -- Flanders, Belgium)} \and Geertrui Van de Voorde }
\begin{document}
\maketitle
\begin{abstract}
In this paper, we study translation hyperovals in $\PG(2,q^k)$. The main result of this paper characterises the point sets defined by translation hyperovals in the Andr\'e/Bruck-Bose representation. We show that the affine point sets of translation hyperovals in $\PG(2,q^k)$ are precisely those that have a scattered $\F_2$-linear set of pseudoregulus type in $\PG(2k-1,q)$ as set of directions. This  correspondence is used to generalise the results of Barwick and Jackson who provided a characterisation for translation hyperovals in $\PG(2,q^2)$, see \cite{BJeven}.

\end{abstract}
\section{Introduction}
Let $\PG(n,q)$ denote the $n$-dimensional projective space over the finite field $\mathbb{F}_q$ with $q$ elements.  

A {$k$-arc} in $\PG(2,q)$ is a set of $k$ points such that no three of them are collinear. A \emph{hyperoval} in $\PG(2,q)$ is a $(q+2)$-arc. Hyperovals only exist when $q$ is even. A {\em translation hyperoval} is a hyperoval $H$ such that there exists a bisecant $\ell$ of $H$ such that the group of elations with axis $\ell$ acts transitively on the points of $H$ not on $\ell$. It is well-known (see e.g. \cite[Theorem 8.5.4]{hirshfeld}) that every translation hyperoval in $\PG(2,q)$ is $\PGL$-equivalent to a point set $\{(1,t,t^{2^i})|t\in \F_{q}\}\cup \{(0,1,0),(0,0,1)\},$ where $q=2^h$ and $\gcd(i,h)=1$.

In \cite{BJeven}, Barwick and Jackson provided a chararacterisation of translation hyperovals in $\PG(2,q^2)$: they considered a set $\C$ of points in $\PG(4,q)$, $q$ even, with certain combinatorial properties with respect to the planes of $\PG(4,q)$ (see Section \ref{veralgemening} for details). They proved that the set $\C'$ of directions determined by the points of $\C$ has the property that every line  intersects $\C'$ in $0,1,3$ or $q-1$ points. They then used this to construct a Desarguesian line spread $\S$ in $\PG(3,q)$, such that in the corresponding Andr\'e/Bruck-Bose plane $\P(\S)\cong \PG(2,q^2)$, the points corresponding to $\C$ form a translation hyperoval. This extended the work done in \cite{BJodd}, where the same authors gave a similar characterisation of Andr\'e/Bruck-Bose representation of conics for $q$ odd. 

In this paper, we will generalise the combinatorial characterisation provided by Barwick and Jackson for translation hyperovals. In order to do this, we prove our main theorem, linking translation hyperovals with $\F_2$-linear sets of pseudoregulus type:
\begin{theorem} \label{main} Let $\Q$ be a set of $q^k$ affine points in $\PG(2k,q)$, $q=2^h$, $h\geq 4$, $k\geq 2$, determining a set $D$ of $q^k-1$ directions in the hyperplane at infinity $H_\infty=\PG(2k-1,q)$. Suppose that every line has $0$, $1$, $3$ or $q-1$ points in common with the point set $D$. Then 
\begin{itemize}
\item[(1)] $D$ is an $\F_2$-linear set of pseudoregulus type.
\item[(2)] There exists a Desarguesian spread $\mathcal{S}$ in $H_\infty$ such that, in the Bruck-Bose plane $\mathcal{P}(\mathcal{S})\cong \PG(2,q^k)$, with $H_\infty$ corresponding to the line $l_\infty$, the points of $\Q$ together with $2$ extra points on $\ell_\infty$, form a translation hyperoval in $\PG(2,q^k)$.
\end{itemize}

Vice versa, via the Andr\'e/Bruck-Bose construction, the set of affine points of a translation hyperoval in $\PG(2,q^k)$, $q> 4, k\geq 2$, corresponds to a set $\Q$ of $q^k$ affine points in $\PG(2k,q)$ whose set of determined directions $D$ is an $\F_2$-linear set of pseudoregulus type. Consequently, every line meets $D$ in $0,1,3$ or $q-1$ points.
\end{theorem}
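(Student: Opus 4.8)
The plan is to treat the two implications separately, proving the constructive converse first and then the combinatorial forward direction, which is where the real work lies. Throughout I identify the affine plane $\mathrm{AG}(2,q^k)$ with $\F_{q^k}^2$ and use the standard form of a translation hyperoval; since $q=2^h$, I regard $\F_{q^k}$ as a vector space over $\F_2$, so that field reduction sends $\mathrm{AG}(2,q^k)$ to $\mathrm{AG}(2k,q)$ and the additive (that is, $\F_2$-linear) structure of point sets is preserved. The two ``extra'' points of the hyperoval on $\ell_\infty$ will play the role of the two transversal points of the associated pseudoregulus.

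For the converse I start from a translation hyperoval $H=\{(1,t,t^{\sigma})\mid t\in\F_{q^k}\}\cup\{(0,1,0),(0,0,1)\}$ with $\sigma=2^i$, $\gcd(i,hk)=1$, and take $\ell_\infty$ to be the bisecant $X_0=0$ joining the two special points. The affine part is then the graph $\{(t,t^{\sigma})\}$, an $\F_2$-subspace of $\F_{q^k}^2$ of rank $hk$. A direct computation shows that the direction determined by two affine points is the slope $s^{\sigma-1}$ with $s=t_2-t_1$, and that $s\mapsto s^{\sigma-1}$ is injective on $\F_{q^k}^*$ precisely because $\gcd(i,hk)=1$ forces the relevant fixed field to be $\F_2$; this yields exactly $q^k-1$ directions, missing only the slopes $0$ and $\infty$. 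After field reduction these directions form the $\F_2$-linear set $L_W$ with $W=\{(x,x^{\sigma})\mid x\in\F_{q^k}\}$ in $H_\infty=\PG(2k-1,q)$, which is by definition of pseudoregulus type, the two omitted directions being the transversal points. The intersection numbers $0,1,3,q-1$ then follow from the general theory of intersections of $\F_2$-linear sets with lines: an $\F_q$-line corresponds to a $2h$-dimensional $\F_2$-space $T$, and $L_W$ meets it in $L_{W\cap T}$, whose size is $2^{\dim_{\F_2}(W\cap T)}-1$ by scatteredness, while the pseudoregulus structure forces $\dim_{\F_2}(W\cap T)\in\{0,1,2,h\}$, giving exactly the four values.

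For the forward direction I would first extract the combinatorial consequences of the hypothesis by double-counting the secants, tangents and $(q-1)$-secants through the points of $D$ and through external points, using $q>4$ and $h\ge 4$ to guarantee the four intersection numbers are distinct. The crucial step is then to upgrade this data to algebraic structure: to show that the set $\Q$ of $q^k$ affine points is, up to translation, an $\F_2$-subspace of $\mathrm{AG}(2k,q)$ of rank $hk$, equivalently that $D$ is a scattered $\F_2$-linear set. This is a direction-problem statement: a set of $q^k=2^{hk}$ points determining only $q^k-1$ directions is extremely deficient, and the prescribed intersection pattern should force the difference set $\Q-\Q$ to be additively closed, so that $D=L_W$ for $W$ the graph of an additive map. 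Once linearity is in hand, I would locate the lines meeting $D$ in $q-1$ points as the transversal lines of the configuration, show there are exactly two transversal points, and invoke the characterisation of linear sets of pseudoregulus type to identify $D$ and to recover that the underlying additive map is a Frobenius power $x\mapsto x^{2^i}$ with $\gcd(i,hk)=1$. The two transversal points then determine a unique Desarguesian spread $\S$ of $H_\infty$, and the Bruck-Bose plane $\P(\S)$ reconstructs $\Q$ together with the two transversals on $\ell_\infty$ as a hyperoval; the $\F_2$-translation group of $\Q$ becomes the elation group fixing the bisecant $\ell_\infty$ and acting transitively off it, so the hyperoval is a translation hyperoval.

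I expect the main obstacle to be precisely this passage from the intersection numbers to $\F_2$-linearity: recognising a linear set from its line-intersection spectrum, and in particular promoting a merely scattered linear set to one of pseudoregulus type, is the delicate part and is where the hypotheses $q>4$ and $h\ge 4$ are genuinely used. A secondary difficulty is verifying that the reconstructed additive map is a single Frobenius power rather than an arbitrary additive bijection --- equivalently, that the linear set is of pseudoregulus type and not merely scattered of maximal rank --- which is exactly the content captured by the two $(q-1)$-secant transversal points and must be checked carefully rather than assumed.
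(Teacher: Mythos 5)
Your converse direction is sound and is essentially the paper's own argument: write the hyperoval in the normal form $\{(1,t,t^{2^i})\}\cup\{(0,1,0),(0,0,1)\}$, field-reduce, observe that the determined directions form the graph-type linear set $\{(u,u^{2^i})_q\}$, and read off the intersection numbers from the fact that the companion automorphism has fixed field $\F_2$. That part I would accept.

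The forward direction, however, is a plan rather than a proof, and the two steps you yourself flag as "the main obstacle" are precisely the ones the paper spends most of its length on; asserting that the intersection pattern "should force" $\Q-\Q$ to be additively closed is not an argument. Concretely, what is missing is: (i) a proof that no three points of $\Q$ are collinear and that any two $(q-1)$-secants to $D$ are disjoint --- the latter is a long and delicate counting argument (Proposition \ref{q-1disjunct}) that manufactures hyperovals in planes $\langle P_0,\ell\rangle$ and derives a contradiction from the number of points off $D$ in a solid; (ii) the actual mechanism for proving $\F_2$-linearity, which in the paper goes through the Barlotti--Cofman representation in $\PG(2hk,2)$: one first shows via a Fano-plane configuration that a $3$-secant at infinity forces an entire $\F_2$-plane to lie in $\tilde{\Q}\cup\tilde{D}$ (Lemma \ref{lemmavlak}), and then inductively grows a subspace of dimension $hk$; (iii) the identification of the transversal structure, which requires first settling $k=2$ (showing the $0$-points organise into two lines) and then bootstrapping to general $k$ by intersecting with solids spanned by pairs of $(q-1)$-secants --- note the transversals are $(k-1)$-\emph{spaces} of $H_\infty$, not points, so "show there are exactly two transversal points" is not the right statement in $\PG(2k-1,q)$. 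Finally, your claim that "the two transversal points then determine a unique Desarguesian spread" is false: two disjoint $(k-1)$-spaces of $\PG(2k-1,q)$ lie in many Desarguesian spreads, and one must explicitly construct the spread $\{T_u=\{(\alpha u,\alpha u^{2^i})_q\}\}\cup\{T_0,T_\infty\}$ adapted to $D$ and verify that each $T_u$ meets $D$ in exactly one point (Lemma \ref{lemmajuistespread}); without that choice the Bruck--Bose plane need not see $\Q$ as an arc. So while your decomposition of the problem matches the paper's, the core of the forward implication remains unproved.
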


This paper is organised as follows. In Section \ref{sectionprel} we give the necessary definitions and background, in section \ref{bewijs}, we provide a proof of Theorem \ref{main}. Finally, we use this result in Section \ref{veralgemening} to generalise the result of Barwick-Jackson \cite{BJeven}.

\section{Preliminaries}\label{sectionprel}
\subsection{Linear sets}\label{subsectionlinear}
Linear sets are a central object in finite geometry and have been studied intensively, mainly due to the connection with other objects such as semifield planes, blocking sets, and more recently, MRD codes. (see e.g. \cite{lavrauw4}, \cite{linset}, \cite{olga}).  
 
 Let $V$ be an $r$-dimensional vector space over $\mathbb{F}_{q^n}$, let $\Omega$ be the projective space $\PG(V) = \PG(r-1, q^n)$. A set $T$ is said to be an \emph{$\mathbb{F}_q$-linear set} of $\Omega$ of rank $t$ if it is defined by the non-zero vectors
of an $\mathbb{F}_q$-vector subspace $U$ of $V$ of dimension $t$, i.e. $$T = L_U = \{\langle u \rangle_{\mathbb{F}_{q^n}}| u \in U \setminus \{0\}\}.$$
 
The points of $\PG(r-1,q^n)$ correspond to $1$-dimensional subspaces of $\mathbb{F}_{q^n}^{r}$, and hence to $n$-dimensional subspaces of $\mathbb{F}_q^{rn}$. In this way, the point set of $\PG(r-1,q^n)$ corresponds to a set $\D$ of $(n-1)$-dimensional subspaces of $\PG(rn-1,q)$, which partitions the point set of $\PG(rn-1,q)$.  The set $\D$ is called a \emph{Desarguesian spread}, and we have a one-to-one correspondence between the points of $\PG(r-1,q^n)$ and the elements of $\D$. Using coordinates, we see that a point $P=(x_0,x_1,\dots x_{r-1})_{q^n} \in \PG(r-1,q^n)$ corresponds to the set $\{ (\alpha x_0,\alpha x_1, \dots, \alpha x_{r-1})_q| \alpha \in \mathbb{F}_{q^n}  \}$ in $\PG(rn-1,q)$. Note that we have used $r$ coordinates from $\F_{q^n}$, defined up to $\F_q$-scalar multiple to define points of $\PG(rn-1,q)$, and the set $\{ (\alpha x_0,\alpha x_1, \dots, \alpha x_{r-1})_q| \alpha \in \mathbb{F}_{q^n}  \}$ consists of $\frac{q^n-1}{q-1}$ different points forming an $(n-1)$-dimensional space. Hence, we find that $\D$ is given by the set of $(n-1)$-spaces 
$$\{ (\alpha x_0,\alpha x_1, \dots, \alpha x_{r-1})_q| \alpha \in \mathbb{F}_{q^n}  \} \; \mathrm{for\ all\ }(x_0,x_1,\ldots,x_{r-1})\in \F_{q^n}^r.$$

Note that these coordinates for points in $\PG(rn-1,q)$ can be transformed into the usual coordinates consisting of $rn$ elements of $\F_q$ by representing the elements of $\F_{q^n}$ as the $n$ coordinates with respect to a fixed basis of $\F_{q^n}$ over $\F_q$.

 We also have a more geometric perspective on the notion of a linear set; namely, an $\mathbb{F}_q$-linear set is a set $T$ of points of $\PG(r-1,q^n)$ for which there exists a subspace $\pi$ in $\PG(rn-1,q)$ such that the points of $T$ correspond to the elements of $\D$ that have a non-empty intersection with $\pi$. For more on this approach to linear sets, we refer to \cite{linset}.
If the subspace $\pi$ intersects each spread element in at most a point, then $\pi$ is called \emph{scattered} with respect to $\D$ and the associated linear set is called a {\em scattered} linear set.  

Note that if $\pi$ is $(n-1)$-dimensional and scattered, then the associated $\mathbb{F}_q$-linear set has rank $n$ and has exactly $\frac{q^n-1}{q-1}$ points, and conversely. In this paper, we will make use of the following bound on the rank of a scattered linear set.
\begin{result}[{\cite[Theorem 4.3]{lavrauw3}}]\label{thm1}
The rank of a scattered $\mathbb{F}_q$-linear set in $\PG(r-1,q^n)$ is at most $ rn/2$.
\end{result}

A \emph{maximum scattered linear set} is a scattered $\mathbb{F}_q$-linear set in $\PG(r-1,q^n)$ with rank  $rn/2$.
In this article we work with maximum scattered linear sets to which a geometric structure, called \emph{pseudoregulus}, can be associated. For more information, we refer to \cite{GVML} and \cite{italiaan}. \begin{definition}
Let $S$ be a scattered $\mathbb{F}_q$-linear set of $\PG(2k-1,q^n)$ of rank $kn$, where $n, k\geq 2$. We say that $S$ is of \emph{pseudoregulus type} if
\begin{enumerate}
    \item  there exist $m=\frac{q^{nk}-1}{q^n-1}$ pairwise disjoint lines of $\PG(2k-1, q^n)$, say $s_1, s_2, \dots , s_m$, such that
    \begin{align*}
    |  S\cap  s_i|=\frac{q^n-1}{q-1}\quad \forall i=1,\dots,m,
    \end{align*}
    \item  there exist exactly two $(k-1)$-dimensional subspaces $T_1$ and $T_2$ of $\PG(2k-1,q^n)$ disjoint from $S$ such that $T_j\cap s_i \neq \emptyset$ for each $i = 1,\dots, m$ and $j = 1, 2$.
\end{enumerate}
\end{definition}
The set of lines $s_i$, $i=1,\dots m$ is called the {\em pseudoregulus} of $\PG(2k-1,q^n)$ associated with the linear set $S$ and we refer to $T_1$ and $T_2$ as \emph{transversal spaces} to this pseudoregulus. Since a maximum scattered linear set spans the whole space, we see that the transversal spaces are disjoint. 

 Throughout this paper we need the following result of \cite{italiaan} on pseudoreguli . Applied to $\F_2$-linear sets, this gives us the following result.
    \begin{result}[{\cite[Theorem 3.12]{italiaan}}]\label{thmitaliaan}
    Each $\mathbb{F}_2$-linear set of $\PG(2k-1, q
)$, $q$ even, of pseudoregulus type, is of the form $L_{\rho,f}$ with 
\begin{align*}
    L_{\rho,f} = \{ ( u, \rho f(u))_q| u\in U_0 \},
\end{align*}
with $\rho\in \F_{q}^*$, $U_0,U_\infty$ the $k$-dimensional vector spaces corresponding to the transversal spaces $T_0,T_\infty$ and with $f:U_0\rightarrow U_\infty$ an invertible semilinear map with companion automorphism $\sigma\in Aut(\mathbb{F}_q)$, $Fix(\sigma)=\{0,1\}$.
    \end{result}
Note that in the previous result, $\PG(2k-1,q)$ is identified with $\PG(V)$, $V= U_0 \oplus U_\infty$ and a point, corresponding to a vector $v=v_1+v_2\in U_0 \oplus U_\infty$, has coordinates $(v_1,v_2)_q$. 


\subsection{The Barlotti-Cofman and Andr\'e/Bruck-Bose constructions}
   

In this paper, we will switch between three different representations of a projective plane $\PG(2,q^k)$, $q=2^h$. Using the Andr\'e/Bruck-Bose correspondence, we can, on one hand, model this plane as a subset of points and $k$-spaces in $\PG(2k,q)$, determined by a $(k-1)$-spread at infinity. On the other hand, we can see it as a subset of points and $hk$-spaces of $\PG(2hk,2)$ determined by a $(hk-1)$-spread at infinity. We can switch between the $\PG(2k,q)$-setting an the $\PG(2hk,2)$-setting by the {\em Barlotti-Cofman} correspondence, which is a natural generalization of the Andr\'e/Bruck-Bose correspondence.


The Barlotti-Cofman representation of the projective space $\PG(2k,2^h)$ in $\PG(2hk,2)$ is defined as follows (see \cite{barlotticofman}).
Let $\S'$ be a Desarguesian $(h-1)$-spread in $\PG(2hk-1, 2)$. Embed $\PG(2hk-1, 2)$ as a hyperplane $\tilde{H_\infty}$ in $\PG(2hk, 2)$.
Consider the following incidence structure $\mathcal{P}(\S) = (\mathcal{P},\mathcal{L} )$, where incidence is natural:\\
\begin{itemize}
	\item The set $\mathcal{P}$ of points consists of the $2^{2hk}$ {\em affine points} $P_i$ in $\PG(2hk,2)$ (i.e. the points not in $\tilde{H_\infty}$) together with elements of the $(h-1)$-spread $\S'$ in $\tilde{H_\infty}$
	\item The set $\mathcal{L}$ of lines consist of the following two sets of subspaces in $\PG(2hk,2)$.
	\begin{itemize}
		\item The set of $h$-spaces spanned by an element of $\S'$ and an affine point of $\PG(2hk,2)$.
		\item The set of $(2h-1)$-spaces in $\tilde{H_\infty}$ spanned by two different elements of $\S'$.
	\end{itemize}
\end{itemize}
This incidence structure $(\mathcal{P},\mathcal{L} )$ is isomorphic to $\PG(2k,2^h)$. We use the notation $P_i$ for the affine point of $\PG(2k,2^h)$ (i.e. a point not contained in $H_\infty$) which corresponds to the affine point $\tilde{P_i}\in \PG(2hk,2)$. A point, say $R_i$ in $H_\infty$, corresponds to the element $\S'(R_i)$ of the $(h-1)$-spread $\S'$ in $\tilde{H_\infty}$.
As already mentioned above, during this paper we will work in the following three projective spaces:

\begin{itemize}
    \item The $2k$-dimensional projective space $\Pi_q=\PG(2k,q)$, $q=2^h, h>2$, with the $(2k-1)$-space at infinity called $H_\infty$.
    \item The projective plane $\Pi_{q^k}=\PG(2,q^k)$, $q=2^h$ with line  at infinity called $\ell_\infty$. Given a Desarguesian $(k-1)$-spread $\S$ in $H_\infty$ in $\Pi_q$, the plane $\Pi_{q^k}$ is obtained by the Andr\'e-Bruck-Bose construction using $\S$.
        \item The $2hk$-dimensional projective space $\Pi_2=\PG(2hk,2)$,  with the $(2hk-1)$-space $\tilde{H_\infty}$ at infinity.
        Note that the Barlotti-Cofman representation of $\Pi_q$ defines a Desarguesian $(h-1)$-spread $\S'$ in $\tilde{H_\infty}$. Moreover, if $\S$ is the $(k-1)$-spread in $H_\infty$ in $\Pi_q$ such that $\Pi_{q^ k}$ is the corresponding projective plane, the Andr\'e-Bruck-Bose representation of $\Pi_{q^k}$ in $\Pi_2$ gives rise to a Desarguesian $(hk-1)$-spread $\tilde{\S}$ in $\tilde{H_\infty}$, such that $\S'$ is a subspread of $\tilde{\S}$. 
\end{itemize}

\vspace{0.5cm}

\section{The proof of the main theorem} \label{bewijs}
Consider $\Pi_q=\PG(2k,q)$ and the hyperplane $H_\infty$ of $\PG(2k,q)$. Recall that a point of $\PG(2k,q)$ is called {\em affine} if it is not contained in $H_\infty$. Likewise, a line is called \emph{affine} if it is not contained in $H_\infty$.  Let $P_1,P_2$ be affine points, then the point $P_1P_2\cap H_\infty$ is the {\em direction} determined by the line $P_1P_2$. If $\Q$ is a set of affine points, then the {\em directions determined by $\Q$} are all points of $H_\infty$ that appear as the direction of a line $P_iP_j$ for some $P_i,P_j\in \Q$.

\vspace{0.5cm}
From now on, we consider a set $\Q$ satisfying the conditions of Theorem \ref{main}:
\begin{itemize}
	\item  $\Q$ is a set of $q^k$ affine points in $\PG(2k,q)$, $q=2^h$, $h\geq 4$, $k \geq 2$;
	\item   $D$, the set of directions determined by $\Q$ at the hyperplane at infinity $H_\infty$ has size $q^k-1$;
	\item  Every line  has $0$, $1$, $3$ or $q-1$ points in common with the point set $D$. 
\end{itemize}

\subsection{The $(q-1)$-secants to $D$ are disjoint}

\begin{definition}
	A \emph{$0$-point} in $H_\infty$ is a point $P\notin D$ such that $P$ is contained in at least one $(q-1)$-secant to $D$. 
\end{definition} From Proposition \ref{q-1disjunct}, it will follow that a $0$-point is contained in precisely one $(q-1)$-secant to $D$. We first start with two lemmas.

\begin{lemma}\label{no3collinear} No three points of $\Q$ are collinear.
\end{lemma}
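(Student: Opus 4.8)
The plan is to argue by contradiction. Suppose some affine line $\ell$ contains $r\ge 3$ points of $\Q$, and let $d_0=\ell\cap H_\infty$ be their common direction. The first step is to examine the planes through $\ell$. For any point $P_0\in\Q\setminus\ell$, put $\pi=\langle \ell,P_0\rangle$ and $m=\pi\cap H_\infty$, a line of $H_\infty$ through $d_0$. The $r$ lines $P_0P_i$ (for the points $P_i\in\Q\cap\ell$) together with $\ell$ determine $r+1$ pairwise distinct directions on $m$: distinctness is immediate, since two coinciding directions among these would force $P_0$ onto $\ell$. Hence $|m\cap D|\ge r+1\ge 4$, and since the only admissible intersection number that is at least $4$ is $q-1$ (recall $q=2^h\ge 16$), we obtain $|m\cap D|=q-1$, and in particular $r\le q-2$.

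The heart of the argument is to play two counts of the $(q-1)$-secants through $d_0$ against each other. For the lower bound I would use that the planes through $\ell$ partition the points off $\ell$, and that distinct such planes meet $H_\infty$ in distinct lines through $d_0$. Each plane $\pi$ as above carries a $(q-1)$-secant $m$, so at least two points of $m$ lie outside $D$; through such a point every affine line of $\pi$ is at most a $1$-secant of $\Q\cap\pi$, which forces $|\Q\cap\pi|\le q$, i.e. at most $q-r$ points of $\Q$ off $\ell$ in each plane. Partitioning the $q^k-r$ points of $\Q\setminus\ell$ then yields at least $\frac{q^k-r}{q-r}$ distinct $(q-1)$-secants through $d_0$. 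For the upper bound, any two distinct $(q-1)$-secants through $d_0$ meet only in $d_0$, so $L$ of them cover at least $1+L(q-2)$ points of $D$; since $|D|=q^k-1$, this gives $L\le \frac{q^k-2}{q-2}$.

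Combining the two bounds requires $\frac{q^k-r}{q-r}\le\frac{q^k-2}{q-2}$. A direct comparison of cross-products shows that at $r=3$ their difference equals $q^k-q>0$, so $\frac{q^k-3}{q-3}>\frac{q^k-2}{q-2}$, and since $\frac{q^k-r}{q-r}$ is increasing in $r$ on the relevant range $3\le r\le q-2$ (its numerator of difference is again $q^k-q>0$), the required inequality fails for every $r\ge 3$; this is the contradiction. I expect the main obstacle to be the per-plane bound $|\Q\cap\pi|\le q$: it is what converts the local secant condition on $D$ into a usable global count, and it depends on first squeezing out the exact value $q-1$ for $|m\cap D|$ and then exploiting the two leftover non-directions on $m$. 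The remainder is bookkeeping, together with the elementary but essential verification that the two resulting bounds on the number of $(q-1)$-secants through $d_0$ are genuinely incompatible for all admissible $r$.
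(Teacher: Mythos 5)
Your proposal is correct and follows essentially the same route as the paper: span each point of $\Q\setminus\ell$ with $\ell$ to get a plane whose line at infinity carries at least $r+1\ge 4$ directions and is therefore a $(q-1)$-secant through $d_0$, bound each such plane to at most $q$ points of $\Q$ via the lines through a point of $m\setminus D$, and then derive a contradiction by counting the resulting $(q-1)$-secants through $d_0$ against $|D|=q^k-1$. The only (immaterial) difference is bookkeeping: you keep the exact parameter $r$ and compare two bounds on the number of secants, while the paper uses the cruder estimates $q^k-t\ge q^k-q$ and $q-t\le q-3$ and counts points of $D$ directly.
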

\begin{proof}
	Let $l$ be an affine line in $\PG(2k,q)$ containing $3\leq t\leq q$ points of $\Q$, and let $P'=l\cap H_\infty$. A point $P_i\in \Q\setminus l$ determines a plane $\alpha_i=\langle P_i,l\rangle$ such that the line $l_i= \alpha_i\cap H_\infty$ is a $(q-1)$-secant: the lines through $P_i$ and a point of $l\cap \Q$ determine $t\geq 3$ directions of $D$ on the line $l_i$, different from the point $P' \in D$. So $l$ contains more than three points of $D$, showing that $l_i$ is a $(q-1)$-secant. Furthermore, the plane $\alpha_i$ contains at most $q$ affine points of $\Q$, as every affine line in $\alpha$ through a $0$-point of $l_i$ contains at most one element of $\Q$. 
	
	This implies that each of the $q^k-t\geq q^k-q$ points of $\Q\setminus l$ define a plane $\alpha$, with $\alpha \cap H_\infty$ a $(q-1)$-secant, and so that $\alpha$ contains at most $q-t\leq q-3$ points of $\Q\setminus l$. This shows that the number of such planes $\alpha_i$ through $l$, and hence the number of $(q-1)$-secants through $P'$, is at least $\frac{q^k-q}{q-3}$. This gives that there are at least $1+\frac{q^k-q}{q-3}(q-2)>q^k-1$ points of $D$, a contradiction.  
\end{proof}

\begin{lemma}\label{hulplemma} 
Let $\gamma$ be a plane in $\PG(2k,q)$ containing $4$ points $P_1,P_2,P_3$ and $P_4$ of $\Q$, such that $P_1P_2 \cap P_3P_4\notin \Q \cup D$. Then $\gamma$ meets $H_\infty$ in a $(q-1)$-secant to $D$.
\end{lemma}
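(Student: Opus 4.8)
The plan is to work entirely inside the plane $\gamma$, which meets $H_\infty$ in a line $\ell := \gamma\cap H_\infty$ (since $\gamma$ contains the affine points $P_i$ it is not contained in $H_\infty$, so a dimension count in $\PG(2k,q)$ gives $\dim(\gamma\cap H_\infty)=1$). By Lemma \ref{no3collinear} the four points $P_1,P_2,P_3,P_4$ form a quadrangle, so they span six distinct secant lines $P_iP_j$, each joining two points of $\Q$ and hence meeting $\ell$ in a direction $d_{ij}:=P_iP_j\cap \ell\in D$. The goal is to show that $\ell$ carries strictly more than three of these directions, so that the admissible intersection sizes force $\ell$ to be a $(q-1)$-secant.

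First I would record which of the six directions may coincide. Two directions sharing an index, say $d_{12}$ and $d_{13}$, must be distinct: equality would make $P_1P_2$ and $P_1P_3$ the same line and place $P_1,P_2,P_3$ on a line, contradicting Lemma \ref{no3collinear}. Hence coincidences occur only between the three opposite pairs $\{d_{12},d_{34}\}$, $\{d_{13},d_{24}\}$, $\{d_{14},d_{23}\}$, and $d_{ij}=d_{kl}$ for such a pair holds exactly when the diagonal point $P_iP_j\cap P_kP_l$ lies on $\ell$, i.e.\ at infinity. This is where the hypothesis enters: if $S_1:=P_1P_2\cap P_3P_4$ were at infinity it would equal $d_{12}=d_{34}\in D$, contradicting $S_1\notin D$. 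So $S_1$ is affine and $d_{12}\neq d_{34}$.

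To control the other two opposite pairs I would use that $q$ is even: the three diagonal points $S_1$, $S_2:=P_1P_3\cap P_2P_4$, $S_3:=P_1P_4\cap P_2P_3$ of a quadrangle are collinear in characteristic two, lying on a common line $m$. Since $S_1\in m$ is affine, $m\neq \ell$, so $\ell$ meets $m$ in at most one point and thus contains at most one of $S_1,S_2,S_3$; as $S_1\notin \ell$, at most one of $S_2,S_3$ lies on $\ell$. Consequently at most one opposite pair collapses, and $\ell$ carries at least five distinct points of $D$. (Even without the collinearity, the crude observation that only $\{d_{13},d_{24}\}$ and $\{d_{14},d_{23}\}$ can collapse already yields at least four distinct directions, which suffices.)

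Finally, since $\ell\cap D$ has at least four points and every line meets $D$ in $0,1,3$ or $q-1$ points, with $q=2^h\geq 16$ so that $q-1\geq 15>4$, the only admissible value is $q-1$; hence $\ell$ is a $(q-1)$-secant, as claimed. I expect the only genuinely delicate point to be the step that converts $P_1P_2\cap P_3P_4\notin D$ into the strict inequality ``more than three directions'': without this hypothesis the quadrangle could degenerate to exactly three directions (all diagonal points at infinity), which is itself an allowed secant length and would block the argument. Everything else is bookkeeping on the quadrangle together with the characteristic-two collinearity of its diagonal points.
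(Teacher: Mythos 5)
Your proof is correct and follows the same route as the paper: show the four points of $\Q$ put at least four distinct directions on $\gamma\cap H_\infty$ (coincidences among the six $d_{ij}$ can only occur between opposite pairs, and the hypothesis rules out the pair $\{d_{12},d_{34}\}$ collapsing), then invoke the $0,1,3,q-1$ condition. The paper's version is just a terser statement of this count; your extra characteristic-two observation about the collinear diagonal points is valid but, as you note yourself, not needed.
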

\begin{proof}By Lemma \ref{no3collinear}, no three points of $P_1, P_2,P_3,P_4$ are collinear. Since $P_1P_2 \cap P_3P_4\notin D$, we see that $P_1P_2$ and $P_3P_4$ define two different directions in $H_\infty$.  The four points $P_1,P_2,P_3$ and $P_4$ determine at least $4$ directions on the line $\gamma \cap H_\infty$. The statement follows since a line  contains $0,1,3$ or $q-1$ points of $D$.
\end{proof}

\begin{proposition}\label{q-1disjunct} Every two $(q-1)$-secants to $D$ are disjoint.

\end{proposition}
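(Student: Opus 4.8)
The plan is to argue by contradiction: suppose two $(q-1)$-secants $\ell_1,\ell_2$ meet in a point $X$, set $M=\langle \ell_1,\ell_2\rangle$ (a plane of $H_\infty$) and $D_M=D\cap M$. Before using the intersection hypothesis I would first record two structural facts. Since $|D|=q^k-1$ and no three points of $\Q$ are collinear (Lemma \ref{no3collinear}), the $q^k-1$ lines joining a fixed $P_0\in\Q$ to the other points of $\Q$ determine $q^k-1$ \emph{distinct} directions, hence \emph{all} of $D$, each exactly once. Thus every affine line whose direction lies in $D$ meets $\Q$ in $0$ or $2$ points, while a line whose direction is not in $D$ meets $\Q$ in at most one point. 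From this I would deduce the key local statement: an affine plane $\pi$ through a $(q-1)$-secant $\ell$ meets $\Q$ in $0$ or exactly $q$ points. Indeed, the two points of $\ell\setminus D$ force $|\Q\cap\pi|\le q$ (every affine line of $\pi$ through such a point carries at most one point of $\Q$), while if $\pi$ contains one point $P_0\in\Q$ then $P_0$ already has $q-1$ partners inside $\pi$, one for each direction of $\ell\cap D$, so $|\Q\cap\pi|\ge q$.

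The heart of the argument is a divisibility constraint obtained by passing to a solid. For $P\in\Q$ put $W=\langle M,P\rangle$; this is a $3$-space with $W\cap H_\infty=M$. On one hand, the $q$ affine planes of $W$ through $\ell_1$ partition the affine points of $W$, and by the local statement each meets $\Q$ in $0$ or $q$ points, so $q\mid|\Q\cap W|$. On the other hand, each point of $\Q\cap W$ determines every direction of $D_M$ exactly once, and the corresponding partner again lies in $W$ (the joining line has its direction in $M\subseteq W$), whereas directions outside $D_M$ produce no partner in $W$; hence $|\Q\cap W|=|D_M|+1$. Combining, $|D_M|\equiv -1\pmod q$. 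In fact the same computation shows $q\mid|D\cap(M'\setminus\ell_1)|$ for every plane $M'$ of $H_\infty$ through a $(q-1)$-secant $\ell_1$, a constraint I expect to use repeatedly.

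Finally I would derive a contradiction inside $M\cong\PG(2,q)$. Here $\ell_1\cap D$ and $\ell_2\cap D$ each have $q-1$ points, and any line joining a point of $\ell_1\cap D$ to a point of $\ell_2\cap D$ already carries two points of $D$; since $2\notin\{0,1,3,q-1\}$ when $q\ge 16$, such a line must contain a further point of $D$, necessarily in $E:=D_M\setminus(\ell_1\cup\ell_2)$. Analysing the perspectivities centred at points of $E$ shows that each point of $E$ lies on almost all (either $q-2$ or $q-1$) of these roughly $(q-1)^2$ joining lines; feeding this incidence information, the exact residue $|D_M|\equiv-1\pmod q$ from the previous step, and the lower bound $|D_M|\ge 2q-3$ into a double count of the pairs (point of $E$, joining line) is designed to be inconsistent.

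The hard part will be closing this last count cleanly in both subcases $X\in D$ and $X\notin D$. The divisibility $|D_M|\equiv-1\pmod q$ is what rules out the smallest admissible values of $|E|$ (which are otherwise internally consistent), and it is precisely here that the hypothesis $h\ge 4$, i.e. $q\ge 16$, is needed: both to make $2$ a forbidden secant length and to separate the residue classes from the crude lower bounds so that no configuration survives.
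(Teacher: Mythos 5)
Your opening moves are sound and coincide with the paper's: the bijection between $\Q\setminus\{P_0\}$ and $D$ from any $P_0\in\Q$, the resulting ``$0$ or $2$ points of $\Q$ on a line with direction in $D$'', and the fact that an affine plane through a $(q-1)$-secant meets $\Q$ in $0$ or exactly $q$ points. Your divisibility constraint $|D_M|\equiv -1\pmod q$ (via the solid $W=\langle M,P\rangle$, $|\Q\cap W|=|D_M|+1$, and the partition of $W\setminus H_\infty$ into planes through $\ell_1$) is correct and is a genuinely nice observation that does not appear in the paper. The problem is that the proof stops exactly where it becomes hard: you assert that the final double count over pairs (point of $E$, joining line) ``is designed to be inconsistent'', but no inconsistency is exhibited, and in fact the count as stated does not close. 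Writing $|D_M|=mq-1$, the two sides of your count give $(q-1)^2+(q-4)N_{q-1}=\sum_{Z\in E}n_Z$ with $n_Z\in\{q-2,q-1\}$ (case $X\notin D$); this rules out $m=2$ but for $m=3$ (so $|E|=q+1$) it is perfectly consistent and merely forces $N_{q-1}=2$, i.e.\ two of the joining lines are themselves $(q-1)$-secants. One can still salvage $m=3$ by noting those two secants carry $2(q-3)-1>q+1$ points of $E$ between them, but for $m\geq 4$ the count forces $N_{q-1}\gtrsim q$, i.e.\ many pairwise-intersecting $(q-1)$-secants inside $M$, and bounding their contribution to $E$ requires controlling how many $(q-1)$-secants pass through a point of $E$ --- which is essentially the statement you are trying to prove. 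So the argument is circular or at least incomplete precisely at the step you flag as ``the hard part'', and you have no mechanism (such as an upper bound on $|D_M|$) to terminate the escalation.

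For comparison, the paper avoids this by working harder locally: it uses the hyperoval structure of $\Q$ in the planes $\langle P_0,\ell_\alpha\rangle$ and $\langle P_0,\ell_\beta\rangle$ together with Lemma \ref{hulplemma} (four points of $\Q$ with non-collinear diagonal point force a $(q-1)$-secant at infinity) to manufacture $\frac{q}{2}-2$ explicit new $(q-1)$-secants $RR_i$ in $M$, deduces that at most three lines through $P'$ are $(\leq 3)$-secants, and then closes with a two-sided count of the points of $M\setminus D$ ($\geq q^2-9q+19$ versus $\leq 5q-4$); the case $P'\notin D$ is then reduced to the case $P'\in D$. If you want to keep your divisibility idea, you would still need some analogue of that secant-generating step (or an a priori upper bound on $|D_M|$ such as $|D_M|<3q-1$) to make the double count terminate; as written, the proposal has a genuine gap.
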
\begin{proof} Consider a point $P_0\in \Q$. Then, by Lemma \ref{no3collinear}, all points of $D$ are defined by the lines $P_0 P_i$ with $P_i\in \Q\setminus \{P_0\}$. Let $P_i'$ denote the direction of the line $P_0 P_i$, that is, the point $P_0P_i\cap H_\infty$. We see that a line through a point $P_i'\in D$ contains $0$ or $2$ points of $\Q$.

Let $l_\alpha$ and $l_\beta$ be two lines, both containing $q-1$ points of $D$, with $P'=l_\alpha \cap l_\beta$. Let $\alpha=\langle P_0, l_\alpha \rangle$ and $\beta=\langle P_0, l_\beta \rangle$  and let $\{P_{1 \alpha},P_{2 \alpha}\}$ and $\{P_{1 \beta},P_{2 \beta}\}$ be the $0$-points in $l_\alpha$ and $l_\beta$. Note that $P'$ may be amongst these points. It follows from the argument above that there are precisely $q$ points in $\alpha\cap \Q$ and that the affine points of $\Q$ in $\alpha$ together with the two points $P_{1 \alpha},P_{2 \alpha}$ form a hyperoval $H_\alpha$. Similarly, we find a hyperoval $H_\beta$ in $\beta$.

We first suppose that $P'\in D$. This implies that there is a point $P\neq P_0$ of $\Q$ on the line $P_0 P'$. Note that $P_0$ and $P$ are contained in $H_\alpha \cap H_\beta$. 

Consider a point $R\in l_\alpha$, different from $P', P_{1 \alpha},P_{2 \alpha}$. Then $R\in D$ and through $R$, there are $\frac{q}{2}$ bisecants to $H_\alpha\neq l_\alpha$. One of these bisecants contains $P$ and another one contains $P_0$. Since $q>8$, there exists a bisecant to $H_\alpha$ through $R$ which intersects the line $P_0 P$ in a point $R_0 \notin \{P_0,P,P'\}$. Through $R_0$, there are $\frac{q}{2}-2$ bisecants $r_i$ to $H_\beta$, different from the lines $R_0 P$, $R_0P_{1\beta}$ and $R_0P_{2\beta}$. Let $r_i\cap l_\beta=R_i, i=1,\dots,\frac{q}{2}-2$. A plane $\langle R, r_i \rangle$ contains two lines, $r_i$ and $m=R R_0$, both containing two points of $\Q$ and $r_i\cap m=R_0\notin \Q$. Hence, by Lemma \ref{hulplemma} we find that every line $R R_i$ is a $(q-1)$-secant to $D$.

So there are $\frac{q}{2}-2$ $(q-1)$-secants of the form $RR_i$, and the total number of $0$-points on these lines is $2(\frac{q}{2}-2)=q-4$. Let $\Omega$ be the set of these $0$-points. We call a \emph{$(\leq 3)$-secant} in $\langle l_\alpha, l_\beta \rangle$ a line with at most $3$ points of $D$. A line through $P'$ in $\langle l_\alpha,l_\beta \rangle$ intersects all lines $RR_i$. The $q-4$ points of $\Omega$ lie on the $q-1$ lines through $P'$ different from $l_\alpha$ and $l_\beta$. Since every line $RR_i$ contains precisely two $0$-points, we find that for $q>8$ there are at most $3$ $(\leq 3)$-secants through $P'$: if there are at least four $(\leq 3)$-secants through $P'$ in $\langle l_\alpha, l_\beta \rangle$, then there are at least $\frac{q}{2}-2-2$ $0$-points of $\Omega$ on each of these lines, as we supposed that $P'\in D$. This implies that there would be at least $4(\frac{q}{2}-4)>q-4$ $0$-points in $\Omega$, which gives a contradiction for $q\geq 16$.

Now we distinguish different cases depending on the number of $(\leq 3)$-secants through $P'$. In each of the cases we will show that there exists at least two $(\leq 3)$-secants $l_1,l_2$ in $\langle l_\alpha,l_\beta \rangle$, and a point $X\notin D$ not on these lines. This leads to a contradiction since there are at least $q+1-7$ lines through $X$, both intersecting $l_1$ and $l_2$ in a point not in $D$, and not through $l_1\cap l_2$. These lines contain at least $3$ points not in $D$ so they have to be $(\leq 3)$-secants. But this implies that there are at least $1+(q-6)(q-3)=q^2-9q+19$ points in $\langle l_\alpha,l_\beta \rangle$, not contained in $D$. On the other hand, there are at most three $(\leq 3)$-secants through $P'$ and the other lines through $P'$ contain two $0$-points. This implies that there are at most $3q+ 2(q-2)=5q-4<q^2-9q+19$ points in $\langle l_\alpha,l_\beta \rangle$, not contained in $D$. This gives a contradiction for $q\geq 16$.

It remains to show that in every case there exists at least two $(\leq 3)$-secants and a point $X\notin D$, not on these lines. 
\begin{itemize}
    \item Suppose first that there are two or three $(\leq 3)$-secants through $P'$. These lines are different from $l_\alpha$, so they do not contain the point $P_{1\alpha}$. Then $X=P_{1 \alpha}\notin D$ is a point not on the $(\leq 3)$-secants.
    \item Suppose there is an unique $(\leq 3)$-secant $l$ through $P'$. Then every other line through $P'$ contains two $0$-points. Suppose first that there exists a $0$-point $P_1$ so that $P_{1\alpha} P_1 \cap l \notin D$. Then $l'=P_{1\alpha} P_1$ contains $3$ points not in $D$, so $l'$ is a $(\leq 3)$-secant. Note that $P_1\neq P_{2\alpha}$ as otherwise $P_{1\alpha} P_1 \cap l=l_\alpha \cap l=P' \in D $. Hence $X=P_{2\alpha}\notin D$ is not contained in $l\cup l'$.
    
    If there is no point $P_1$ so that $P_{1 \alpha} P_1 \cap l \notin D$, then all $2q-4$ $0$-points on the $(q-1)$-secants through $P'$, different from $l_\alpha, l_\beta$, lie on at most $2$ lines  $P_{1\alpha}P_1$ and $P_{1\alpha}P_2$, with $P_1,P_2\in D\cap l\setminus \{P'\}$. But then $P_{1\alpha}P_1$ and $P_{1\alpha}P_2$ are $(\leq 3)$-secants. Note that these lines are different from $l_\alpha$, and so, they do not contain $P_{2\alpha}$. Hence we may take $X=P_{2\alpha}$.
    
    \item Suppose all lines through $P'$ are $(q-1)$-secants with $\Gamma$ the corresponding set of $2q+2$ $0$-points. Let $G\in \Gamma$ and consider the $q+1$ lines through $G$ in $\langle l_\alpha, l_\beta \rangle$. The $2q+1$ other points of $\Gamma$ lie on these lines and since every line contains $2$ or at least $q-2$ points not in $D$, we find that through $G$ there is at least one $(\leq 3)$-secant $l_1$. Consider now a point $G'\in \Gamma \setminus l_1$. Through this point there is also a $(\leq 3)$-secant $l_2$. The lines $l_1\cup l_2$ contain at most $2q+1$ points of $\Gamma$, so there is at least one $0$-point $X$ not contained in these two lines.

\end{itemize}
This shows that two $(q-1)$-secants cannot meet in a point $P'$ of $D$. Suppose now that $P'\notin D$. As above, we find for a given point $R\in D\cap l_\alpha$, at least $\frac{q}{2}-2$ $(q-1)$-secants $R R_i$, different from $l_\alpha$. But by the previous part, we know that there are no two $(q-1)$-secants through a point $R \in D$. As $\frac{q}{2}-2\geq 2$, we find a contradiction.  

\end{proof}

We now deduce a corollary that will be useful later.

\begin{gevolg}\label{q-1en3disjunct}
A $(q-1)$-secant and a $3$-secant to $D$ in $H_\infty$ cannot have a $0$-point in common.
\end{gevolg}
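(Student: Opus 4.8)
The plan is to argue by contradiction. Suppose a $(q-1)$-secant $l$ and a $3$-secant $m$ meet in a $0$-point $G$; let $G'$ be the second point of $l$ lying outside $D$, and let $A,B,C$ be the three points of $D$ on $m$ (none of which lies on $l$, since $l\cap m=\{G\}$). The first move is to reconstruct four hyperovals around $l$. Fixing $P_0\in\Q$, Lemma \ref{no3collinear} shows that every direction of $D$ is determined exactly once through $P_0$, so there are points $Q_A,Q_B,Q_C\in\Q$ with $P_0Q_X$ in direction $X$; as $X\in m$, each $Q_X$ lies in $\mu:=\langle P_0,m\rangle$, and $\{P_0,Q_A,Q_B,Q_C\}$ is a quadrangle whose vertices avoid $\pi:=\langle P_0,l\rangle$. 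Hence $\pi$ and $\lambda_X:=\langle Q_X,l\rangle$ ($X=A,B,C$) are four distinct planes through $l$; each meets $H_\infty$ in the $(q-1)$-secant $l$ and contains a point of $\Q$, so by the analysis in the proof of Proposition \ref{q-1disjunct} each carries exactly $q$ affine points of $\Q$, completed by $G$ and $G'$ to a hyperoval.

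Next I would work inside the solid $\Sigma:=\langle\pi,m\rangle$, whose plane at infinity is $\tau:=\langle l,m\rangle$. The four planes above lie in $\Sigma$ and are pairwise disjoint off $l$, so they contribute $4q$ affine points of $\Q$. Conversely, the $q$ planes through $m$ different from $\tau$ each meet $H_\infty$ in the $3$-secant $m$, hence contain at most four points of $\Q$; since they partition the affine points of $\Sigma$, this forces $|\Q\cap\Sigma|=4q$, with every such point lying in one of the four planes through $l$.

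The heart of the matter is then a double count of $D\cap\tau$. Intra-plane pairs determine precisely the $q-1$ directions of $l\cap D$, while a pair of points lying in two different planes through $l$ determines a direction off $l$ (a direction on $l$ would force both points into one plane through $l$). For such an inter-plane direction $E$, the $E$-partner of any point of $\Q\cap\Sigma$ again lies in $\Sigma$, hence in $\Q\cap\Sigma$, so the $4q$ points split into $2q$ bisecants of direction $E$; comparing this with the $\binom{4q}{2}-4\binom{q}{2}=6q^2$ inter-plane pairs yields exactly $3q$ inter-plane directions, all off $l$. Therefore $|D\cap\tau|\ge (q-1)+3q=4q-1$. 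Finally I would fix any $E\in D\cap\tau$ and sum $|D\cap\ell|$ over the $q+1$ lines $\ell\subseteq\tau$ through $E$: this sum equals $q+|D\cap\tau|$, whereas Proposition \ref{q-1disjunct} permits at most one of these lines to be a $(q-1)$-secant and forces the remaining ones to be $1$- or $3$-secants, bounding the sum by $(q-1)+3q=4q-1$. Thus $|D\cap\tau|\le 3q-1$, contradicting $|D\cap\tau|\ge 4q-1$ because $q\ge 16$.

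The hard part will be the middle step: establishing $|\Q\cap\Sigma|=4q$ and, above all, showing that each inter-plane direction is covered by exactly $2q$ bisecants, since it is this exact count that makes $D\cap\tau$ provably large. Once that rigidity is in place, the concluding line count through a point of $D\cap\tau$ is routine, resting only on the pairwise disjointness of $(q-1)$-secants from Proposition \ref{q-1disjunct}.
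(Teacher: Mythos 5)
Your proof is correct, but it follows a genuinely different route from the paper's. The paper argues synthetically inside the two planes $\alpha=\langle P_0,l_\alpha\rangle$ and $\beta=\langle P_0,l_\beta\rangle$ over the $3$-secant and the $(q-1)$-secant: the seven points of $\Q\cup D$ in $\alpha$ form a Fano plane, which yields an affine point $R_0\notin\Q$ on the common line $\alpha\cap\beta$ lying on a bisecant of $\Q\cap\alpha$ with direction a point $P_1'\in D$ of the $3$-secant; at least two bisecants of the $q$-arc $\Q\cap\beta$ through $R_0$ then produce, via Lemma \ref{hulplemma}, two distinct $(q-1)$-secants through $P_1'$, contradicting Proposition \ref{q-1disjunct} immediately. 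You instead run a counting argument in the solid $\Sigma=\langle P_0,l,m\rangle$, squeezing $|\Q\cap\Sigma|$ between the lower bound $4q$ from the four planes through $l$ and the upper bound $4q$ from the pencil of planes through $m$ (each such plane determines at most the three directions $A,B,C$, so by Lemma \ref{no3collinear} carries at most $4$ points of $\Q$), and then exploiting the resulting rigidity to force $|D\cap\tau|\geq 4q-1$, against the bound $|D\cap\tau|\leq 3q-1$ coming from the $q+1$ lines through a fixed point of $D\cap\tau$, at most one of which can be a $(q-1)$-secant by Proposition \ref{q-1disjunct}. The two steps you flag as delicate do go through: each $\lambda_X$ contains $Q_X\in\Q$ and hence exactly $q$ points of $\Q$ by the hyperoval argument inside the proof of Proposition \ref{q-1disjunct}, and since every affine line with direction in $D$ carries $0$ or $2$ points of $\Q$ while the line joining $E\in\tau$ to a point of $\Sigma$ stays in $\Sigma$, each inter-plane direction is indeed covered by exactly $2q$ bisecants of $\Q\cap\Sigma$, giving $6q^2/2q=3q$ such directions. (One cosmetic slip: $P_0$ itself lies in $\pi$; what you need, and what does hold, is that $Q_A,Q_B,Q_C\notin\pi$ and that the $\lambda_X$ are pairwise distinct because $Q_XQ_Y$ would otherwise have direction $G\notin D$.) The paper's proof is shorter and purely incidence-geometric; yours is heavier but more quantitative, pinning down $|\Q\cap\Sigma|=4q$ and a large portion of $D\cap\tau$ exactly as a by-product.
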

\begin{proof}
Let $l_\alpha$ be a $3$-secant to $D$, $l_\beta$ be a $(q-1)$-secant to $D$, and $P'=l_\alpha \cap l_\beta$ be a $0$-point. Pick $P_0 \in \Q$ and let $\alpha=\langle P_0, l_\alpha \rangle$ and $\beta=\langle P_0, l_\beta \rangle$. 
The points of $Q\cup D$ in $\alpha$ form a Fano plane: let $P'_i$, $i=1,2,3$, be the three points of $D$ on the line $l_\alpha$ and let $P_i$, $i=1,2,3$ be the corresponding affine points of $\Q$ so that $P_0P_i\cap l_\alpha =P_i'$. Since there are only three directions $P_1',P_2',P_3'$ of $D$ in $\alpha$,  we find that $\{P_1,P_3,P_2'\}$,$\{P_1,P_2,P_3'\}$ and $\{P_2,P_3,P_1'\}$ are triples of collinear points. Since also $\{P_1',P'_2,P'_3\}$ and $\{P_0,P_i,P'_i\}$, $i=1,2,3$ are triples of collinear points, we find that the points $\{P_0,P_1,P_2,P_3,P'_1,P'_2,P'_3\}$ define a Fano plane $\PG(2,2)$. 
Let $R _0$ be the point $P_1'P_2\cap P'P_0$.  Note that $R_0\notin \Q$. As the points of $\Q$ in $\beta$ form a $q$-arc, we know that there are at least two lines $R_0 R_1$ and $R_0 R_2$ in $\beta$, with $R_1,R_2 \in l_\beta \cap D$, such that both lines contain $2$ points of $\Q$. By Lemma \ref{hulplemma} we see that the lines $P_1'R_1$ and $P_1'R_2$ are both $(q-1)$-secants through $P_1'$. This gives a contradiction by Proposition \ref{q-1disjunct}.
\end{proof}

\subsection{The set $D$ of directions in $H_\infty$ is a linear set}\label{sectionlinear}

Recall that we use the notation $\tilde{P}$ for the affine point in $\Pi_2$, corresponding to the affine point $P\in \Pi_q$. 
 Let $\S'$ be the $(h-1)$-spread in the hyperplane $\tilde{H_\infty}$ of $\PG(2hk,2)$ corresponding to the points of the hyperplane $H_\infty$ of $\Pi_q$. We use the notation $\S'(P')$ for the element of $\S'$ corresponding to the point $P'\in H_\infty$. We will now show that $D$ is an $\F_2$-linear set in $H_\infty$ by showing that its points correspond to spread elements in $\tilde{H_\infty}$ intersecting some fixed $(hk-1)$-subspace of $\tilde{H_\infty}$. 

  Let $\mathscr{Q}=\Q \cup D$, $\tilde{\mathscr{Q}}=\tilde{\Q} \cup \tilde{D}$, with  $\tilde{\Q}$ the union of the points  $\tilde{P},$ with $P\in Q$, and $\tilde{D}$ the directions in $\tilde{H_\infty}$ determined by the points of $\tilde{\Q}$.

\begin{lemma}\label{lemmavlak}
Let $P_0, P_1, P_2 \in \Q$ and $P'_i=P_0P_i \cap H_\infty$, $i=1,2$.  If $P_1'P_2'$ is a $3$-secant to $D$, then the plane in $\PG(2hk,2)$ spanned by $\Tilde{P_0}$, $\Tilde{P_1}$ and $\Tilde{P_2}$ is contained in $\Tilde{\mathscr{Q}}$.
\end{lemma}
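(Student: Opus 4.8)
The plan is to check, one by one, that each of the seven points of the Fano plane $\pi=\langle \tilde{P_0},\tilde{P_1},\tilde{P_2}\rangle$ in $\PG(2hk,2)$ lies in $\tilde{\mathscr{Q}}$. First, writing affine points of $\Pi_2$ in homogeneous coordinates as $(1:v)$, one has $(1:v)+(1:w)=(0:v+w)$, so the sum of two affine points lies on $\tilde{H_\infty}$; in particular $\tilde{P_0}+\tilde{P_1}\in\tilde{H_\infty}$, so the affine point $\tilde{P_2}$ is not on the $\F_2$-line $\tilde{P_0}\tilde{P_1}$, and $\pi$ is a genuine plane with $7$ points. Three of these are $\tilde{P_0},\tilde{P_1},\tilde{P_2}\in\tilde{\Q}$. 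The three points $\tilde{P_i}+\tilde{P_j}$ lie on $\tilde{H_\infty}$, and being the third point of the $\F_2$-line through the pair $\tilde{P_i},\tilde{P_j}\in\tilde{\Q}$, they are directions determined by $\tilde{\Q}$ and hence lie in $\tilde{D}$. Thus everything reduces to the seventh point, the centre $Z=\tilde{P_0}+\tilde{P_1}+\tilde{P_2}$.

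To handle $Z$, I would use that the affine part of the Barlotti--Cofman correspondence is field reduction, an additive ($\F_2$-linear) bijection $\phi$: if $P\in\Q$ has affine coordinate vector $v$ over $\F_q$, then $\tilde{P}$ has affine coordinate vector $\phi(v)$ over $\F_2$. Writing $v_0,v_1,v_2$ for the coordinate vectors of $P_0,P_1,P_2$ and using $1+1+1=1$ in $\F_2$ together with the additivity of $\phi$, I get $Z=(1:\phi(v_0)+\phi(v_1)+\phi(v_2))=(1:\phi(v_0+v_1+v_2))$. Hence $Z$ is affine, namely $Z=\tilde{R}$, where $R\in\Pi_q$ is the affine point with coordinate vector $v_0+v_1+v_2$, and it remains to prove that $R\in\Q$.

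I would identify $R$ using the $3$-secant hypothesis. Exactly as in the proof of Corollary \ref{q-1en3disjunct}, the $3$-secant $P_1'P_2'$ carries a third direction $P_3'\in D$, and the plane $\alpha=\langle P_0,P_1',P_2'\rangle$ of $\Pi_q$ meets $\mathscr{Q}$ in a Fano subplane whose affine points are $P_0,P_1,P_2$ and a fourth point $P_3\in\Q$, the second point of $\Q$ on the line $P_0P_3'$. Since any line joining two points of $\Q$ determines a direction of $D$, and $\alpha\cap H_\infty=P_1'P_2'$ carries only the three directions $P_1',P_2',P_3'$ of $D$, the collinearity pattern is forced: $\{P_1,P_2,P_3'\}$, $\{P_1,P_3,P_2'\}$ and $\{P_2,P_3,P_1'\}$. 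Putting $a=v_1-v_0$, $b=v_2-v_0$, $c=v_3-v_0$, these read $b-a=\lambda c$, $c-a=\mu b$ and $c-b=\nu a$ for suitable scalars $\lambda,\mu,\nu\in\F_q^*$; as $a,b$ are independent (because $P_1'\neq P_2'$), solving this small system over $\F_q$ with $q$ even forces $\lambda=\mu=1$, whence $c=a+b$, i.e. $v_3=v_0+v_1+v_2$. Therefore $R=P_3\in\Q$, so $Z=\tilde{P_3}\in\tilde{\Q}$, and all seven points of $\pi$ lie in $\tilde{\mathscr{Q}}$.

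The step I expect to be the crux is the compatibility between the two geometries: the Fano configuration in $\alpha\cong\PG(2,q)$ is an $\F_q$-object, while $\pi$ is an $\F_2$-object, and one must see that the centre $Z$ of $\pi$ matches the fourth affine point $P_3$ of $\alpha$. This works because in characteristic $2$ the fourth point of the $\F_q$-configuration satisfies $v_3=v_0+v_1+v_2$, which is exactly the sum giving the centre of the $\F_2$-plane, and because field reduction is additive these two sums agree. The remaining care is purely bookkeeping: tracking which of the seven points land on $\tilde{H_\infty}$ and which stay affine.
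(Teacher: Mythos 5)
Your proof is correct, and it reaches the conclusion by a genuinely different route from the paper. Both arguments begin the same way: the $3$-secant hypothesis produces a fourth point $P_3\in\Q$ on $P_0P_3'$ and forces the Fano configuration $\{P_1,P_2,P_3'\}$, $\{P_1,P_3,P_2'\}$, $\{P_2,P_3,P_1'\}$ inside the plane $\alpha$ of $\Pi_q$. Where you diverge is in transferring this to $\Pi_2$. The paper argues synthetically: it supposes $\tilde{P_0},\tilde{P_1},\tilde{P_2},\tilde{P_3}$ span a $3$-space and derives a contradiction, because the lines $A_1B_1\subset \S'(P_1')$ and $A_2B_2\subset \S'(P_2')$ (obtained from the two pairs of opposite sides of the quadrangle meeting the respective spread elements) would then be coplanar and hence meet, contradicting disjointness of spread elements; coplanarity of the four affine points then yields the Fano plane whose three infinite points are determined directions. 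You instead compute: you enumerate the seven points of $\langle\tilde{P_0},\tilde{P_1},\tilde{P_2}\rangle$, dispose of the three infinite ones as determined directions, and identify the centre $\tilde{P_0}+\tilde{P_1}+\tilde{P_2}$ with $\tilde{P_3}$ by solving the collinearity system over $\F_q$ to get $v_3=v_0+v_1+v_2$ and invoking the additivity of field reduction. Your version makes explicit which point of $\tilde{\Q}$ the seventh point is, and isolates the precise algebraic reason the lemma works (characteristic $2$ plus $\F_2$-linearity of the affine Barlotti--Cofman map); the paper's version avoids coordinates and any explicit appeal to the form of the correspondence on affine points, trading that for the spread-disjointness argument. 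The only mild caveat in your write-up is that the additivity of the affine part of the correspondence depends on the spread $\S'$ being in standard (field-reduced) position, which is how the paper sets it up, so this is legitimate but worth stating.
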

\begin{proof}
Since $P_1'P_2'$ is not a $(q-1)$-secant, we know that there is a unique point $P_3'\neq P'_1,P'_2$ in $P_1'P_2'\cap D$, and a point $P_3 \in \Q$ such that $P'_3 \in P_0P_3$. Let $\alpha$ be the plane spanned by the points $P_0, P_1$ and $P_2$. As $\alpha\cap D=\{P_1',P_2',P_3'\}$, we find that $\{P_1,P_3,P_2'\}$,$\{P_1,P_2,P_3'\}$ and $\{P_2,P_3,P_1'\}$ are triples of collinear points. As in the proof of \ref{q-1en3disjunct}, we find that these points define a Fano plane $\PG(2,2)$. 
We claim that the corresponding points $\Tilde{P_0}$, $\Tilde{P_1}$, $\Tilde{P_2}$ and $\Tilde{P_3}$ lie in a plane in $\PG(2hk,2)$. Suppose these points are not contained in a plane in $\PG(2hk,2)$, then they span a $3$-space $\beta$. 
Since $P_1'=P_0P_1\cap P_2P_3$, $\Tilde{P_0}\Tilde{P_1}$ meets $\S'(P_1')$ in a point, say $A_1$. Similarly, $\Tilde{P_2}\Tilde{P_3}$ meets $\S'(P_1')$ in a point, say $B_1$. Since $\Tilde{P_0},\Tilde{P_1},\Tilde{P_2},\Tilde{P_3}$ span a $3$-space, $A_1\neq B_1$. 
Similarly, the points $A_2=\Tilde{P_0}\Tilde{P_2}\cap \S'(P_2')$ and $B_2=\Tilde{P_1}\Tilde{P_3}\cap \S'(P_2')$ are different and span the line $A_2 B_2$. But now $A_1B_1\in \S'(\Tilde{P_1'})$ and $A_2B_2\in \S'(\Tilde{P_2'})$ are two lines in the plane $\beta\cap \Tilde{H_\infty}$, so they intersect, a contradiction since the spread elements $\S'(P_1')$ and $\S'(P_2')$ are disjoint.
\end{proof}

\begin{theorem}\label{linearset}
The set $D$ is an $\F_2$-linear set.
\end{theorem}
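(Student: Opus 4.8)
The plan is to show that $D$ is an $\F_2$-linear set by exhibiting a fixed $(hk-1)$-dimensional subspace $\Sigma$ of $\tilde{H_\infty}$ such that the points of $D$ correspond precisely to the spread elements of $\S'$ meeting $\Sigma$. By the geometric characterisation of linear sets recalled in Section~\ref{subsectionlinear}, this is exactly what it means for $D$ to be $\F_2$-linear. The key tool is Lemma~\ref{lemmavlak}: whenever $P_1'P_2'$ is a $3$-secant, the whole plane $\langle \tilde{P_0},\tilde{P_1},\tilde{P_2}\rangle$ lies inside $\tilde{\mathscr{Q}}$. I want to leverage this to build up a large subspace of $\Pi_2$ that lies inside $\tilde{\mathscr{Q}}$ and whose intersection with $\tilde{H_\infty}$ is the desired $\Sigma$.

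First I would fix an affine base point $P_0\in\Q$ and set $\tilde{P_0}$ as a reference point; all directions of $D$ arise as $P_0P_i\cap H_\infty$ by Lemma~\ref{no3collinear}. The strategy is to consider the set $W$ of all affine points $\tilde{P}$ of $\Pi_2$ lying in $\tilde{\mathscr{Q}}$ (equivalently, the points $\tilde{P_i}$ for $P_i\in\Q$, together with $\tilde{P_0}$), and to argue that $W\cup\{\,$their directions$\,\}$ forms a subspace of $\Pi_2$. The engine for closure is Lemma~\ref{lemmavlak}: if two of my accumulated directions $P_1',P_2'$ span a $3$-secant, then the plane through $\tilde{P_0},\tilde{P_1},\tilde{P_2}$ is swallowed by $\tilde{\mathscr{Q}}$, which forces new affine points of $\Q$ (and hence new directions) to appear inside the structure. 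I would then show that the directions so obtained close up under the operation ``take the third point on a $3$-secant,'' and that this $\F_2$-span of directions accounts for all of $D$.

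Concretely, I expect the argument to run by establishing that the subspace $\langle \tilde{\Q}\rangle_{\F_2}$ spanned in $\Pi_2$ by $\tilde{P_0}$ and the affine points $\tilde{P_i}$ meets $\tilde{H_\infty}$ in a subspace $\Sigma$, and that a spread element $\S'(P')$ meets $\Sigma$ if and only if $P'\in D$. The forward direction (if $P'\in D$ then $\S'(P')$ meets $\Sigma$) is immediate since $P'=P_0P_i\cap H_\infty$ gives a point of $\tilde{P_0}\tilde{P_i}\cap\tilde{H_\infty}\subseteq\S'(P')\cap\Sigma$. For the reverse direction I would use Lemma~\ref{lemmavlak} repeatedly: the $3$-secant structure lets me transport collinearity-in-$\Pi_q$ into genuine subspace membership in $\Pi_2$, so that the directions of $D$ behave additively under the $\F_2$-structure induced by the spread $\S'$. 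The bookkeeping should confirm that $D$ has exactly the $q^k-1$ points matching a rank-$hk$ $\F_2$-linear set, i.e.\ $\Sigma$ has projective dimension $hk-1$.

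The main obstacle will be controlling the dimension of $\Sigma$ and ensuring that the closure process genuinely produces a \emph{subspace} rather than merely a set closed under the $3$-secant operation. In particular, I must rule out that applying Lemma~\ref{lemmavlak} to different triples produces inconsistent or ``too large'' configurations that would overshoot the $q^k-1$ directions; the counting constraint $|D|=q^k-1$ together with the line-intersection hypothesis ($0,1,3,q-1$) must be invoked to pin the dimension down exactly. A delicate point is that Lemma~\ref{lemmavlak} only applies to $3$-secants, so I must verify that enough pairs of directions span $3$-secants (rather than $(q-1)$-secants) to generate the entire linear structure; here Proposition~\ref{q-1disjunct} and Corollary~\ref{q-1en3disjunct}, which keep the $(q-1)$-secants disjoint and separated from $3$-secants, are what let me propagate the $\F_2$-linearity coherently across $H_\infty$ without the $(q-1)$-secants interfering.
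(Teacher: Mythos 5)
Your plan is essentially the paper's own proof: you show that $\tilde{\mathscr{Q}}$ is a subspace of $\Pi_2$ by using Lemma~\ref{lemmavlak} as the closure engine and Proposition~\ref{q-1disjunct} to guarantee enough $3$-secants through any point of $D$, then read off the $(hk-1)$-space $\Sigma=\tilde{\mathscr{Q}}\cap\tilde{H_\infty}$ defining the linear set. The paper carries out the closure exactly as you sketch, first absorbing $3$-spaces spanned by two planes over intersecting $3$-secants and then growing a $t$-space to a $(t+1)$-space by adjoining any point of $\tilde{\Q}$ outside it, with the dimension pinned down by $|\tilde{\Q}|=2^{hk}$.
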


\begin{proof}
We will show that the set $\Tilde{\mathscr{Q}}$ of points in $\PG(2hk,2)$ forms a subspace. By Lemma \ref{lemmavlak}, we have the following property:  
if $\Tilde{P_0}$, $\Tilde{P_1}$ and $\Tilde{P_2}$ are three points in $\Tilde{\Q}$ such that the line at infinity of the plane spanned by these points corresponds to a $3$-secant in $\Pi_q$, then we know that all points of $\langle \Tilde{P_0},\Tilde{P_1},\Tilde{P_2}\rangle$ are included in $\Tilde{\mathscr{Q}}$.

Consider now a point $\tilde{P_0}\in \tilde{\Q}$ and a point $\Tilde{P_1} \in \tilde{\D}$. Let $P_1$ be the point corresponding to the spread element through $\Tilde{P_1}$ (i.e. $P_1$ is the unique point such that $\Tilde{P_1}$ is contained in $\S'(P_1)$). By Proposition \ref{q-1disjunct} we can take two $3$-secants to $D$, say $L_\alpha$ and $L_\beta$ through $P_1$ in $\Pi_q$. Let $l_\alpha$ and $l_\beta$ denote the unique line through $\Tilde{P_1}$ such that the spread elements intersecting $l_\alpha$ and $l_\beta$ correspond precisely the the points of $D$ on $L_\alpha\cup L_\beta$. Let $\alpha=\langle \tilde{P_0}, l_\alpha \rangle$ and $\beta=\langle \tilde{P_0}, l_\beta \rangle$.

Let $\{\tilde{P_1}, \tilde{P_2}, \tilde{P_3} \}= \alpha \cap \tilde{H_\infty}$ and let $\{\tilde{P_1}, \tilde{P_4}, \tilde{P_5}\} = \beta \cap \tilde{H_\infty}$. Consider an affine point $\Tilde{P}$ in $ \gamma=\langle \alpha,\beta \rangle$, $\tilde{P}\notin \alpha \cup \beta$. We want to show that $\tilde{P}$ lies in $\Tilde{\Q}$. Let $\Tilde{P'}$ be the point at infinity of the line $\Tilde{P_0} \Tilde{P}$. W.l.o.g. we suppose that $\Tilde{P'}= \Tilde{P_2}\Tilde{P_5}\cap \Tilde{P_3}\Tilde{P_4}$. Let $P_2,P_3,P_4,P_5$ be the points in $H_\infty$ corresponding to the spreadelements of $\S'$ through $\tilde{P_2}, \tilde{P_3}, \tilde{P_4}, \tilde{P_5}$. We know that $P_2P_5$ and $P_3P_4$ cannot both be $(q-1)$-secants by Proposition \ref{q-1disjunct}. So suppose that $P_2P_5$ is a $3$-secant in $\PG(2k-1,q)$. By Lemma \ref{lemmavlak}, we know that all points of the plane $\langle \Tilde{P_2}\Tilde{P_5}, \Tilde{P_0}\rangle$ lie in $\Tilde{\mathscr{Q}}$. This proves that $\Tilde{P}\in \Tilde{\Q}$, and so that $\gamma \setminus \tilde{H_\infty} \subset \tilde{\Q}$. As $\tilde{\D}$ is the set of directions determined by $\tilde{\Q}$, we also find that $\gamma \cap \tilde{H_\infty} \in \tilde{\D}$.

We conclude that all points of a $3$-space through a point $\Tilde{P_0}$ of $\Tilde{\mathcal{Q}}$, whose point set at infinity corresponds to two intersecting  $3$-secants at infinity, are contained in $\Tilde{\mathscr{Q}}$. 
 
 Now suppose that there is a $t$-space $\beta$, with $\beta \subset \tilde{\mathscr{Q}}$. By the previous part of this proof, we may assume that the points in $H_\infty$, corresponding to the spread elements intersecting $\beta \cap \Tilde{H_\infty}$, are not all contained in a single $(q-1)$-secant.
 
 If $t=hk$, then our proof is finished, so assume that $t<hk$. This implies that there exists a point $\Tilde{G}\in \Tilde{\Q}\setminus \beta$. Let $G$ be the corresponding point in $\mathcal{Q}$ in $\PG(2k,q)$, and let $\gamma=\langle \beta,\Tilde{G}\rangle$.
 We show that every point $\tilde{X}$ in $\gamma \setminus \beta$ is a point of $\tilde{\mathscr{Q}}$. 
 Suppose first that $\tilde{X}$ is a point at infinity of $\gamma \setminus \beta$, then the line $\tilde{X}\Tilde{G}$ contains an affine point $\Tilde{Y}$ of $\beta$, as $\beta$ is a hyperplane of $\gamma$. But since $\Tilde{G}$ and $\Tilde{Y}$ are points of $\Tilde{\Q}$, we find that $\tilde{X}\in \Tilde{D}\subset\tilde{\mathscr{Q}}$. 
 
 Suppose now that $\tilde{X}$ is an affine point in 
 $\gamma \setminus \beta$, and let $X$ be the corresponding point in $\PG(2k,q)$.  As the field size in $\PG(2hk,2)$ is $2$, the line $\tilde{X}\Tilde{G}$ contains $1$ extra point $\Tilde{Y}$. This point has to lie in $\beta$ and in the hyperplane at infinity, so $\Tilde{Y}\in \beta\cap \tilde{H_\infty}$. Let $l_1$ be a line through $\Tilde{Y}$  in $\beta$ corresponding to a $3$-secant, which exists since we have seen that not all points corresponding to points of $\beta \cap H_\infty$ are contained in one single $(q-1)$-secant. The plane spanned by $\Tilde{G}$ and $l_1$ is contained in $\Tilde{\mathscr{Q}}$ by Lemma \ref{lemmavlak}, and hence, since $X$ lies on the line $\Tilde{Y}\Tilde{G}$ which is contained in this plane, $X\in \Tilde{\mathscr{Q}}$. This implies that $\gamma\subseteq \mathscr{Q}$. We can repeat this argument until we find that $\tilde{\mathscr{Q}}$ is a $hk$-space in $\PG(2hk,2)$.
 \end{proof}

Note that $D$ is a scattered linear set since $|D|=q^k-1=2^{hk}-1=|\PG(hk-1,2)|$. As $D$ has rank $hk$, we find that $D$ is maximum scattered.

\begin{remark} In Lemma \ref{q-1disjunct}, we showed that the $(q-1)$-secants to $D$ were disjoint. In Theorem \ref{linearset}, we have used this to show that $D$ is a maximum scattered $\F_2$-linear set. The fact that $(q-1)$-secants to a maximum scattered $\F_2$-linear set are disjoint, is well-known (see e.g. \cite[Proposition 3.2]{italiaan}).
\end{remark}

\subsection{The set $D$ is an $\F_2$-linear set of pseudoregulus type}\label{sectionpseudo}
The proof that $D$ is of pseudoregulus type, is based on some ideas of \cite[Lemma 5 and Lemma 7]{GVML}.
\begin{lemma}\label{lemmaq+1disjointq-1k}
There are $\frac{q^k-1}{q-1}$ pairwise disjoint $(q-1)$-secants to $D$ in $\PG(2k-1,q), q>4$.
\end{lemma}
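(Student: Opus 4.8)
The plan is to show that the $(q-1)$-secants to $D$ partition its point set. Since any two $(q-1)$-secants are disjoint by Proposition \ref{q-1disjunct}, a collection of pairwise disjoint $(q-1)$-secants covers at most $(q-1)\cdot\frac{q^k-1}{q-1}=q^k-1=|D|$ points, so there can be at most $\frac{q^k-1}{q-1}$ of them, with equality precisely when every point of $D$ lies on a $(q-1)$-secant. Hence the statement reduces to the claim that each point $P'\in D$ is contained in one $(q-1)$-secant.

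To prove this claim I would pass to the $\F_2$-representation set up in Theorem \ref{linearset}. Let $\pi$ be the $(hk-1)$-dimensional subspace of $\tilde{H_\infty}=\PG(2hk-1,2)$, scattered with respect to the Desarguesian $(h-1)$-spread $\S'$, whose associated $\F_2$-linear set is $D$. A line $\ell$ of $H_\infty$ corresponds to the $(2h-1)$-space $\hat\ell$ spanned by the spread elements over its points, and since $\pi$ is scattered one has $|D\cap \ell|=2^{\dim(\pi\cap\hat\ell)+1}-1$; the hypothesis that lines meet $D$ in $0,1,3$ or $q-1$ points thus forces $\dim(\pi\cap\hat\ell)\in\{-1,0,1,h-1\}$, and $\ell$ is a $(q-1)$-secant exactly when $\dim(\pi\cap\hat\ell)=h-1$. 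Now fix $P'\in D$ with spread element $S_{P'}=\S'(P')$, and suppose for contradiction that no $(q-1)$-secant passes through $P'$. Then for every $(2h-1)$-space $\hat\ell\supseteq S_{P'}$ we must have $\dim(\pi\cap\hat\ell)\le 1$.

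I would then project from $S_{P'}$. Projecting $\PG(2hk-1,2)$ from the spread element $S_{P'}$ carries $\S'$ to a Desarguesian $(h-1)$-spread of the quotient $\PG(2hk-h-1,2)$, which is the $\F_2$-representation of $\PG(2k-2,q)$, the quotient of $H_\infty$ by $P'$. The image $\bar\pi$ of $\pi$ is an $(hk-2)$-dimensional subspace, and the bound $\dim(\pi\cap\hat\ell)\le 1$ says exactly that $\bar\pi$ meets every element of the quotient spread in at most a point; that is, $\bar\pi$ is scattered, defining a scattered $\F_2$-linear set of rank $hk-1$ in $\PG(2k-2,q)$. By Result \ref{thm1} the rank of such a set is at most $\frac{(2k-1)h}{2}=hk-\frac{h}{2}$, and since $q>4$ gives $h\ge 3$ we have $hk-\frac{h}{2}<hk-1$, a contradiction. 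Therefore every $P'\in D$ lies on a $(q-1)$-secant, and combined with Proposition \ref{q-1disjunct} this yields the desired partition of $D$ into $\frac{q^k-1}{q-1}$ pairwise disjoint $(q-1)$-secants.

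The main obstacle, and the place where the hypothesis $q>4$ enters, is this last step: one must check that the projection of the Desarguesian spread is again Desarguesian and identifies the quotient with the $\F_2$-representation of $\PG(2k-2,q)$, so that Result \ref{thm1} applies to $\bar\pi$, and that the ``no $(q-1)$-secant'' assumption translates faithfully into $\bar\pi$ being scattered. Once these bookkeeping facts are in place, the rank bound closes the argument immediately. An alternative, more self-contained route avoiding the projection would argue directly in $H_\infty$ via the hyperoval attached to a $(q-1)$-secant (as in the proof of Proposition \ref{q-1disjunct}) together with Lemma \ref{hulplemma}, propagating one $(q-1)$-secant to cover neighbouring directions; but the projection argument is cleaner and makes the role of $q>4$ transparent.
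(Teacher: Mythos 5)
Your proof is correct, and it takes a genuinely different route from the paper's. Both arguments reduce the lemma to the claim that every point of $D$ lies on a $(q-1)$-secant, after which Proposition \ref{q-1disjunct} turns the $(q-1)$-secants into a partition of $D$ and gives the count $\frac{q^k-1}{q-1}$. The paper proves the claim by a global averaging argument in $\PG(2hk-1,2)$: it counts the lines through points of $K$ that are contained in spread elements against the $2^{hk}-1$ many $hk$-spaces through $K$, shows that any such space containing at least two of these lines contains exactly $2^{h-1}-1$ of them, and then uses the fact that the average equals this maximum to force every $hk$-space through $K$ to contain exactly $2^{h-1}-1$ such lines, so in particular every point of $D$ is on a $(q-1)$-secant. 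You instead argue locally at a single point $P'$: if no $(q-1)$-secant passes through $P'$, the projection of $K$ from $\S'(P')$ is an $(hk-2)$-dimensional subspace that is scattered with respect to the induced Desarguesian $(h-1)$-spread of the quotient, i.e.\ a scattered $\F_2$-linear set of rank $hk-1$ in $\PG(2k-2,q)$, and this contradicts Result \ref{thm1} because $hk-1>(2k-1)h/2$ exactly when $h\geq 3$, i.e.\ $q>4$. Both proofs ultimately rest on the same two ingredients (Result \ref{thm1} applied to a subspace that is one dimension too large to be scattered, and Proposition \ref{q-1disjunct}), but your version is shorter, avoids the somewhat delicate average-equals-maximum step, and makes the role of the hypothesis $q>4$ completely transparent; what the paper's counting buys in exchange is the sharper statement that every $hk$-space through $K$ contains exactly $2^{h-1}-1$ spread-lines, which is not needed for this lemma. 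The bookkeeping facts you flag are indeed all routine: each element of the quotient spread is the common image of the $2^h$ elements of $\S'$ lying in a $(2h-1)$-space $\hat\ell$ through $\S'(P')$, the quotient spread is Desarguesian (it is the field-reduction spread of $H_\infty/P'\cong\PG(2k-2,q)$), and $\bar\pi\cap\bar{S}$ equals the projection of $\pi\cap\hat\ell$ because any preimage in $\pi$ of a point of that intersection already lies in $\hat\ell$, so the dimension drops by exactly one (the point $\pi\cap\S'(P')$) and the translation between the two scatteredness conditions is faithful.
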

\begin{proof}
Let $K$ be the $(hk-1)$-dimensional subspace in $\PG(2hk-1,2)$ defining the $\F_2$-linear set $D$ and let $\S'$ be the $(h-1)$-spread that corresponds to the point set of $\PG(2k-1,q)$.
For every $hk$-space $Y$ through $K$ in $\PG(2hk-1,2)$, we find at least one element of $\S'$ that intersects $Y$ in a line since $D$ is maximum scattered. Every line $l$, through a point of $K$, such that $l$ lies in an element of $\S'$, defines a $hk$-space through $K$, and the number of $hk$-spaces through $K$ is $2^{hk}-1$. This implies that there are on average $2^{h-1}-1>2$ lines contained in different spread elements of $\S'$ in a $hk$-space through $K$ in $\PG(2hk-1,2)$. 

Take a $hk$-space $Y$ through $K$ with at least two lines contained in spread elements, and let $S_1$ and $S_2$ be two elements of $\S'$ that intersect $Y$ in the lines $y_1$ and $y_2$ respectively. The $(2h-1)$-space $\langle S_1,S_2\rangle$ intersects $K$ in at least a plane, as $y_1$ and $y_2$ span a $3$-space. But this implies that the line $l$ in $\PG(2k-1,q)$, corresponding with $\langle S_1,S_2\rangle$ contains at least $7$ points of $D$. This implies that $l$ is a $(q-1)$-secant of $D$, and that $\langle S_1,S_2\rangle$ intersects $K$ in a $(h-1)$-space $\alpha$ as a $(h-1)$-space contains $2^h-1=q-1$ points. 
Consider now the $h$-space $\beta=Y\cap \langle S_1,S_2 \rangle$ through $\alpha$. Since all of the $2^h+1$ $(h-1)$-spaces of $\S'$ in $\langle S_1,S_2 \rangle$ intersect $\beta$ in a point or a line, we find that there are precisely $2^{h-1}-1$ elements of $\S'$, meeting $\beta$, and so $Y$, in a line. 
Hence, this proves that a $hk$-space $Y$ through $K$, containing at least $2$ lines $y_1,y_2$ in $S_1,S_2$ respectively, contains at least $2^{h-1}-1$ lines $y_i$ in different spread elements of $\S'$. Now we prove, by contradiction, that $Y$ cannot contain more lines $y_i$ contained in a spread element. Suppose $Y$ contains another line $y_0 \subset S_0$ with $S_0 \in \S'$, then $y_0 \notin \langle S_1, S_2 \rangle$. Repeating the previous argument for $y_1$ and $y_2$ shows that there are two $(2h-1)$-spaces $\langle S_1, S_2 \rangle$ and $\langle S_0, S_1 \rangle$, both meeting $K$ in a $(h-1)$-space and so, there are two $(q-1)$-secants through $P_1\in H_\infty$, the point corresponding to the spread element $S_1$. This gives a contradiction by Proposition \ref{q-1disjunct}. 

Since the average number of lines contained in a spreadelement in a $hk$-space through $K$ is $2^{h-1}-1>2$, we find that every $hk$-space through $K$ contains exactly $2^{h-1}-1$ lines contained in a spreadelement. In particular, every line $y_i \subset S_i$, with $S_i \in \S'$ and $y_i$ through a point of $K$, defines a $hk$-space though $K$, and so a $(q-1)$-secant. So we find that every point in $D$ is contained in at least one $(q-1)$-secant. As we already proved that two $(q-1)$-secants are disjoint (see Lemma \ref{q-1disjunct}), we find $\frac{q^k-1}{q-1}$ pairwise disjoint $(q-1)$-secants in $\PG(2k-1,q)$.
\end{proof}

We will first show that the linear set is of pseudoregulus type when $k=2$. To prove this, we begin with a lemma.

\begin{lemma}\label{rechtedoor20is0}
Assume that $k=2$. Let  $l$ be a line in $H_\infty$ through two $0$-points, not on the same $(q-1)$-secant, then $l$ contains no points of $D$.
\end{lemma}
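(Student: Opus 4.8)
The plan is to argue by contradiction inside a single plane of $H_\infty=\PG(3,q)$, playing a lower bound on $|D\cap\rho|$ against an upper bound. Write $X_1,X_2$ for the two given $0$-points, lying on distinct $(q-1)$-secants $s_1\ne s_2$; by Proposition~\ref{q-1disjunct} these secants are disjoint, so $X_2\notin s_1$ and hence $l\ne s_1$. Suppose, for contradiction, that $l$ meets $D$ and fix a point $Z\in l\cap D$. Since $l\cap s_1=\{X_1\}$ and $X_1\notin D$, we have $Z\ne X_1$ and so $Z\notin s_1$. I would then work in the plane $\rho=\langle l,s_1\rangle$ (a plane because $l$ and $s_1$ meet in $X_1$) and estimate $|D\cap\rho|$ in two incompatible ways.

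For the lower bound, recall from Lemma~\ref{lemmaq+1disjointq-1k} together with Proposition~\ref{q-1disjunct} that every point of $D$ lies on a unique $(q-1)$-secant; let $s_3$ be the one through $Z$, and note $s_3\ne s_1$ as $Z\notin s_1$. For each of the $q-1$ points $R\in s_1\cap D$, the line $ZR$ lies in $\rho$ and already contains the two distinct points $Z,R$ of $D$, so it is a $3$- or $(q-1)$-secant. It cannot be a $(q-1)$-secant: such a secant through $Z$ would differ from $s_3$ (because $R\notin s_3$ by disjointness), contradicting uniqueness. Hence each $ZR$ is a $3$-secant, contributing a third point of $D$ lying off $s_1$; these third points are pairwise distinct, as distinct lines $ZR$ meet only in $Z$. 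Together with $s_1\cap D$ and $Z$ this gives $|D\cap\rho|\ge (q-1)+1+(q-1)=2q-1$.

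For the upper bound I would use the \emph{second} $0$-point. The point $X_2$ lies on $l\subset\rho$ and is a $0$-point of $s_2$; moreover $s_2\not\subset\rho$, since otherwise $s_1$ and $s_2$ would be coplanar and therefore meet, contradicting their disjointness. Consequently no line of $\rho$ through $X_2$ equals $s_2$, so by Proposition~\ref{q-1disjunct} none of them is a $(q-1)$-secant and by Corollary~\ref{q-1en3disjunct} none is a $3$-secant; each therefore meets $D$ in at most one point. Summing over the $q+1$ lines of $\rho$ through $X_2$, which cover $\rho$, gives $|D\cap\rho|\le q+1$. Since $q>2$, the inequalities $2q-1\le|D\cap\rho|\le q+1$ are contradictory, so $l\cap D=\emptyset$.

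The decisive feature is the interplay between the two $0$-points: the lower bound needs a \emph{complete} $(q-1)$-secant ($s_1$) lying inside $\rho$, so that $Z$ spawns many $3$-secants, while the upper bound needs the \emph{other} $0$-point $X_2$ to sit on a secant $s_2$ that escapes $\rho$, capping every pencil line through $X_2$ at a single point of $D$. This is exactly what the hypothesis that $X_1,X_2$ lie on different $(q-1)$-secants provides; were they on the same secant, the capping mechanism would fail. The step I expect to require the most care is verifying that each line $ZR$ is a genuine $3$-secant rather than a $(q-1)$-secant, which rests squarely on the uniqueness of the $(q-1)$-secant through the point $Z\in D$.
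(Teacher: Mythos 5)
Your proof is correct, but it follows a genuinely different route from the paper's. The paper argues globally in $\PG(3,q)$: since $|D|=q^2-1=2(q-1)+(q-1)^2$ and every pair $(A,B)$ with $A\in s_1\cap D$, $B\in s_2\cap D$ spawns exactly one further point of $D$ on the $3$-secant $AB$, every point of $D$ off $s_1\cup s_2$ lies on a unique transversal $3$-secant to $s_1$ and $s_2$; a point $X\in l\cap D$ would then put $l$ and its transversal $l'$ in a common plane containing both $s_1$ and $s_2$, contradicting their disjointness. You instead work locally in the plane $\rho=\langle l,s_1\rangle$ and play two counts against each other: at least $2q-1$ points of $D$ in $\rho$ (the secant $s_1$, the point $Z$, and the third points of the $3$-secants $ZR$), versus at most $q+1$ (every line of $\rho$ through the second $0$-point $X_2$ is neither a $(q-1)$-secant, by Proposition \ref{q-1disjunct}, nor a $3$-secant, by Corollary \ref{q-1en3disjunct}). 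All the ingredients you invoke (Proposition \ref{q-1disjunct}, Corollary \ref{q-1en3disjunct}, Lemma \ref{lemmaq+1disjointq-1k}) precede this lemma, so there is no circularity; note also that your exclusion of $ZR$ being a $(q-1)$-secant could be done more directly ($ZR$ would meet $s_1$ in the point $R\in D$, contradicting Proposition \ref{q-1disjunct}) without appealing to the secant $s_3$ through $Z$ at all. What your approach buys is independence from the exact count $|D|=q^2-1$: nothing in your argument uses $k=2$, so it in fact proves the statement for a line through two $0$-points on distinct $(q-1)$-secants in $\PG(2k-1,q)$ for any $k$, whereas the paper's counting identity is specific to $k=2$ and the general case is only reached later via Theorem \ref{thmpseudo}.
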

\begin{proof}
Let $l_1$ and $l_2$ be two $(q-1)$-secants in $\PG(3,q)$ and let $l$ be a line through a $0$-point of $l_1$ and through a $0$-point of $l_2$. Recall that $l_1$ and $l_2$ are disjoint by Lemma \ref{q-1disjunct}.
Every two points $(A,B)$, $A\in l_1$, $B\in l_2$, define a third point in $D$ on the line $AB$. Hence we find, since $|D|=q^2-1$, that every point $P\in D\setminus \{l_1,l_2\}$ is uniquely defined as a third point on a line, defined by two points $A$ and $B$ of $D$ in $l_1$ and $l_2$ respectively. 

Now suppose that $l$ contains a point $X\in D$, then $X$ lies on a unique line $l'$, intersecting $l_1$ and $l_2$ in precisely one point.  But then $l_1$ and $l_2$ lie in a plane spanned by $l$ and $l'$, a contradiction since $l_1$ and $l_2$ are disjoint by Lemma \ref{q-1disjunct}. 
\end{proof}

\begin{proposition}\label{thmpseudo2}
Assume that $k=2$. The $(q-1)$-secants to $D$ in $\PG(3,q)$ form a pseudoregulus. 
\end{proposition}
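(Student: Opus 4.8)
The plan is to verify directly the two defining conditions of a pseudoregulus for $k=2$. Condition (1) — the existence of $\tfrac{q^2-1}{q-1}=q+1$ pairwise disjoint lines meeting $D$ in $\tfrac{q-1}{2-1}=q-1$ points each — is exactly Lemma \ref{lemmaq+1disjointq-1k}, so the whole problem reduces to producing the two transversal lines required by condition (2): two lines $T_1,T_2$, disjoint from $D$, each meeting all of the secants $s_1,\dots,s_{q+1}$. First I record the bookkeeping. Since the $q+1$ secants are pairwise disjoint (Proposition \ref{q-1disjunct}) and cover $(q+1)(q-1)=q^2-1=|D|$ points of $D$, they \emph{partition} $D$; hence each $s_i$ carries exactly two $0$-points, and these $2q+2$ $0$-points are pairwise distinct, each on a unique secant. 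A transversal is then precisely a line meeting $D$ in $0$ points (a \emph{$0$-secant}) that meets every $s_i$, i.e.\ one consisting of one $0$-point from each secant.

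The engine is a planar observation. Fix a $0$-point $A$ on a secant $s$ and any other secant $s'$; note $s\cap s'=\emptyset$, so $s\not\subset\pi:=\langle A,s'\rangle$. Because no $3$-secant passes through a $0$-point (Corollary \ref{q-1en3disjunct}) and $s$ is the only $(q-1)$-secant through $A$ (Proposition \ref{q-1disjunct}), every line through $A$ in $\pi$ other than the two lines joining $A$ to the $0$-points of $s'$ is a $1$-secant; consequently $D\cap\pi=D\cap s'$. Combined with Lemma \ref{rechtedoor20is0}, this forces all $0$-points of $\pi$ to lie on the two lines through $A$ joining it to the two $0$-points of $s'$.

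The heart of the proof is the key lemma: \emph{a $0$-secant meeting at least three of the secants meets all of them.} I would argue by contradiction. Suppose a $0$-secant $\ell$ meets $s_1,s_2,s_3$ at $0$-points $A_1,A_2,A_3$ but misses some $s_4$. Consider $\Pi_1=\langle A_1,s_4\rangle$ and $\Pi_2=\langle A_2,s_4\rangle$: if $A_2\in\Pi_1$ then $\ell=A_1A_2\subset\Pi_1$ and $\ell$ would meet $s_4$, so $\Pi_1\ne\Pi_2$, and since both contain $s_4$ we get $\Pi_1\cap\Pi_2=s_4$. By the planar observation $D\cap\Pi_j\subseteq s_4$, so (as $s_3\cap s_4=\emptyset$ and $s_3\not\subset\Pi_j$) the point $s_3\cap\Pi_j$ is a $0$-point of $s_3$ for $j=1,2$; it differs from $A_3$ (else $A_3\in\Pi_j$ and again $\ell\subset\Pi_j$ meets $s_4$), hence equals the \emph{second} $0$-point $B_3$ of $s_3$ for both $j$. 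Then $B_3\in\Pi_1\cap\Pi_2=s_4$, contradicting $s_3\cap s_4=\emptyset$. This is where the real work lies, and the step I expect to be the main obstacle: the whole contradiction rests on selecting two planes through $s_4$ whose intersection is forced back onto $s_4$, together with the planar observation guaranteeing that the relevant intersection points are $0$-points.

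Granting the key lemma, the conclusion is short. Fix a $0$-point $A_1$ on $s_1$; in $\pi=\langle A_1,s_2\rangle$ the $q-1\ge 1$ points $s_i\cap\pi$ $(i\ge 3)$ are $0$-points lying on the two lines through $A_1$ of the planar observation, so one of those lines meets $s_1,s_2$ and at least one further secant, hence meets three secants and, by the key lemma, is a transversal $T_1$ through $A_1$. Repeating at the other $0$-point of $s_1$ gives a transversal $T_2$, and $T_1\ne T_2$ since a line through both $0$-points of $s_1$ would be $s_1$, not a $0$-secant. For the exact count: any transversal meets $s_1$ in one of its two $0$-points, and two transversals through $A_1$ would both be $0$-secants of $\pi$ through $A_1$, hence the two lines of the planar observation; each would then meet every $s_i$, so each $P_i:=s_i\cap\pi$ would lie on both, forcing $P_i\in\{A_1\}$ — impossible. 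Thus there is exactly one transversal through each $0$-point of $s_1$, so exactly two in all, disjoint from $D$ and meeting every $s_i$; conditions (1) and (2) hold and the $(q-1)$-secants form a pseudoregulus.
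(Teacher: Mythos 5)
Your proof is correct, and each step checks out: the planar observation follows from Proposition \ref{q-1disjunct}, Corollary \ref{q-1en3disjunct} and Lemma \ref{rechtedoor20is0} exactly as you say, and the two-plane argument for the key lemma is sound (in $\PG(3,q)$ the planes $\Pi_1,\Pi_2$ are forced to be distinct, hence to meet precisely in $s_4$, so the second $0$-point of $s_3$ cannot lie in both). Your route does differ from the paper's in the crucial step of showing that the candidate transversal meets \emph{all} the secants. The paper constructs $l_0$ as the unique line through a $0$-point $P_1$ of $l_1$ meeting $l_2$ and $l_3$, shows its intersection points are $0$-points, and then, for an arbitrary further secant $l_4$, runs a case analysis on which of the four lines $l_0$, $l_\infty$, $P_1Q_2$, $Q_1P_2$ the unique transversal through a $0$-point $P_4$ of $l_4$ to $l_1,l_2$ can be, eliminating the bad cases by coplanarity contradictions that require a second auxiliary transversal $m'$ to $l_2,l_3$. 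Your key lemma --- a $0$-secant meeting three of the secants meets all of them --- proved by intersecting $\langle A_1,s_4\rangle$ with $\langle A_2,s_4\rangle$, replaces that case analysis by a single uniform argument and is arguably more transparent; it also lets you verify the ``exactly two transversals'' clause of the pseudoregulus definition explicitly, which the paper leaves implicit (there it follows from the fact that all $0$-points lie on $l_0\cup l_\infty$). Both proofs ultimately rest on the same three ingredients: the pairwise disjointness of the $(q-1)$-secants, the fact that no $3$-secant passes through a $0$-point, and Lemma \ref{rechtedoor20is0}.
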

\begin{proof}
By Lemma \ref{lemmaq+1disjointq-1k} it is sufficient to prove that there exist $2$ lines in $\PG(3,q)$ that have a point in common with all $(q-1)$-secants to $D$.
Consider three $(q-1)$-secants $l_1$, $l_2$ and $l_3$ and let $P_i,Q_i\in l_i$, $i=1,2,3$ be the corresponding $0$-points. 
Let $l_0$ be the unique line through $P_1$ that intersects $l_2$ and $l_3$ both in a point, say $R_2=l_0\cap l_2$ and $R_3=l_0\cap l_3$ respectively. By Corollary $\ref{q-1en3disjunct}$, $R_2$ and $R_3$ cannot both belong to $\Q$, so suppose $R_2$ is a $0$-point of $l_2$ (w.l.o.g. $R_2=P_2$). We see that $l_0=P_1P_2$ is a line through two $0$-points, so $R_3$ is also a $0$-point by Corollary \ref{rechtedoor20is0}, w.l.o.g. $R_3=P_3$. By the same argument,  we see that $Q_1,Q_2$ and $Q_3$ are contained in a line, say $l_\infty$.

Now we want to show that every other $(q-1)$-secant has a $0$-point in common with both $l_0$ and $l_\infty$.
Consider an $(q-1)$-secant $l_4$, different from $l_1,l_2,l_3$, with $0$-points $P_4$ and $Q_4$. Consider now again the unique line $m$ through $P_4$ that intersects $l_1$ and $l_2$ in a point. By the previous arguments $m$ has to contain a $0$-point of $l_1$ and a $0$-point of $l_2$, so $m=l_0$,  $m=l_\infty$, $m=P_1Q_2$ or $m=Q_1P_2$. We will show that only the first two possibilities can occur, which then proves that every other $0$-point lies on $l_0$ or $l_\infty$. 
Suppose to the contrary that $m=P_1Q_2P_4$ (the case $m=Q_1P_2P_4$ is completely analogous). Then  the unique line through $Q_4$, meeting $l_1$ and $l_2$ is the line $Q_1P_2$. Consider now the unique line $m'$ through $P_4$ meeting $l_2$ and $l_3$ in a point. As we supposed that $m\neq l_0$ and $m\neq l_\infty$, we see that $P_4$ cannot lie on these lines, so $m'$ contains the points $P_4,P_2,Q_3$ or the points $P_4,Q_2,P_3$. In the former case both lines $l_0$ and $l_\infty$ are contained in the plane spanned by $m'=P_4Q_3P_2$ and $m=P_1Q_2P_4$. This implies that the disjoint lines $l_1$ and $l_2$ are contained in this plane, a contradiction. If $m'=P_4P_3Q_2$, then $m$ and $m'$ both contain $P_4$ and $Q_2$ but intersect $l_0$ in different points, a contradiction. 
We conclude that  $P_4$, and analoguously $P_4'$, is contained in the line $l_0$ or $l_\infty$. 
\end{proof}

{\color{red}}
Using the previous proposition, we will prove that for all $k$, the $\F_2$-linear set $D$ in $\PG(2k-1,q)$ is of pseudoregulus type.
\begin{theorem}\label{thmpseudo}
The $(q-1)$-secants to $D$ in $\PG(2k-1,q)$ form a pseudoregulus.
\end{theorem}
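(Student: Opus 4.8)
The plan is to verify the two conditions in the definition of a linear set of pseudoregulus type. Condition (1) is already in hand: by Lemma \ref{lemmaq+1disjointq-1k} there are exactly $\frac{q^k-1}{q-1}$ pairwise disjoint $(q-1)$-secants covering $D$, and since each such secant is a line with $q+1$ points of which $q-1$ lie in $D$, each carries exactly two non-$D$ points, which are precisely its two $0$-points. So the whole content lies in condition (2): producing two $(k-1)$-dimensional transversal spaces $T_1,T_2$, disjoint from $D$, each meeting every $(q-1)$-secant. The strategy is to reduce every two-secant configuration to the already-settled case $k=2$ (Proposition \ref{thmpseudo2}) and then to glue the resulting planar transversals into global subspaces.

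Step 1 (reduction to $3$-spaces). For two disjoint $(q-1)$-secants $l_i,l_j$ set $\Sigma_{ij}=\langle l_i,l_j\rangle\cong\PG(3,q)$. I first show $|D\cap\Sigma_{ij}|=q^2-1$: for $A\in l_i\cap D$ and $B\in l_j\cap D$ the line $AB$ cannot be a $(q-1)$-secant, since it would meet $l_i$ in $A\in D$ contradicting Proposition \ref{q-1disjunct}, hence it is a $3$-secant determining a unique third point of $D$ in $\Sigma_{ij}$; a short incidence argument in the $3$-space shows $(A,B)\mapsto C$ is injective, giving $2(q-1)+(q-1)^2=q^2-1$ points. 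Thus $D\cap\Sigma_{ij}$ is a maximum scattered $\F_2$-linear set of rank $2h$ in $\PG(3,q)$ satisfying the same line-intersection condition, so the conclusions of Proposition \ref{thmpseudo2} and Lemma \ref{rechtedoor20is0} apply inside $\Sigma_{ij}$. Consequently the $(q-1)$-secants lying in $\Sigma_{ij}$ form a planar pseudoregulus whose two transversal lines pass through $0$-points; in particular they pair the two $0$-points of $l_i$ with those of $l_j$, the "same-side" lines being exactly the lines joining two $0$-points and containing no point of $D$.

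Step 2 (gluing). Fix a secant $l_1$ with $0$-points $P_1,Q_1$. For every other secant $l_i$, Step 1 lets me label its $0$-points $P_i,Q_i$ so that $P_1P_i$ and $Q_1Q_i$ are the two transversals of the planar pseudoregulus in $\Sigma_{1i}$. Put $T_1=\{P_i\}$ and $T_2=\{Q_i\}$. The key claim is that $T_1$ is closed under taking lines: for any $i,j$ the line $P_iP_j$ is a transversal of the planar pseudoregulus in $\Sigma_{ij}$, hence consists of $q+1$ $0$-points, namely the $P$-labelled $0$-points of the secants contained in $\Sigma_{ij}$, so $P_iP_j\subseteq T_1$. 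Granting this, $T_1$ is a subspace with exactly $\frac{q^k-1}{q-1}$ points, hence a $(k-1)$-space; it is disjoint from $D$ (all its points are $0$-points), meets every secant by construction, and is disjoint from $T_2$ since the two $0$-points of a secant are distinct and oppositely labelled. The same holds for $T_2$, and being disjoint $(k-1)$-spaces they span $H_\infty$. Uniqueness of the pair $\{T_1,T_2\}$ then follows, because any $(k-1)$-space disjoint from $D$ and meeting every secant must select one of the two $0$-points on each secant and be line-closed, which forces it to coincide with $T_1$ or $T_2$; hence conditions (1)--(2) hold and $D$ is of pseudoregulus type.

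The main obstacle is the consistency (transitivity) underlying the closure claim in Step 2: the labelling $P_i$ is defined pairwise through $\Sigma_{1i}$, and I must show that $P_i,P_j$ really lie on a common transversal in the independently defined space $\Sigma_{ij}$ — equivalently, that $P_1\sim P_i$ and $P_1\sim P_j$ force $P_i\sim P_j$. I expect to settle this by analysing the three secants $l_1,l_i,l_j$ inside the subspace they span: using the characterisation of same-side lines as the $0$-point lines avoiding $D$, a failure of transitivity would produce a line meeting $D$ in a number of points outside $\{0,1,3,q-1\}$, or two $(q-1)$-secants sharing a point in violation of Proposition \ref{q-1disjunct}. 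Making this contradiction precise, while controlling the possibly varying dimension of $\langle l_1,l_i,l_j\rangle$, is the delicate part of the argument; the $3$-space reduction of Step 1 together with the base case $k=2$ carries the rest.
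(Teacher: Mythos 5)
Your Step 1 is correct and is exactly the paper's opening move: for two $(q-1)$-secants $l_i,l_j$ the span $\Sigma_{ij}$ meets $D$ in $2(q-1)+(q-1)^2=q^2-1$ points, so $D\cap\Sigma_{ij}$ is maximum scattered in $\PG(3,q)$ and Proposition \ref{thmpseudo2} hands you the two planar transversal lines through the $0$-points. The problem is that the whole content of the theorem sits in the closure claim of your Step 2 --- that the pairwise labelling $P_i$, defined through $\Sigma_{1i}$, is consistent, i.e.\ that $P_iP_j$ is again a transversal ($0$-line) in the independently defined $\Sigma_{ij}$ --- and you do not prove it. You correctly identify it as ``the delicate part'' and say you ``expect'' to derive a contradiction inside $\langle l_1,l_i,l_j\rangle$, but no argument is given; note also that this span can have dimension $3$, $4$ or $5$, so any case analysis there would have to cover all three situations. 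As it stands this is a genuine gap: the subspace structure of $T_1$, its size, and its disjointness from $T_2$ are all downstream of the unproved transitivity.

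The paper closes precisely this gap, and it does so by working with the set of all $0$-points rather than with a two-sided labelling. From the $k=2$ case one extracts that every line of $H_\infty$ meets the set of $0$-points in $0$, $1$, $2$ or $q+1$ points, hence any plane containing three $0$-lines consists entirely of $0$-points; a count shows each $0$-point lies on exactly $\frac{q^{k-1}-1}{q-1}$ $0$-lines and on $\frac{q^k-1}{q-1}$ lines carrying exactly one further $0$-point. Then, for a point $P_3$ on a $0$-line through $P_0$ and any further $0$-line $m$ through $P_3$, either $m$ meets a second $0$-line through $P_0$ --- in which case the plane $\langle P_0,m\rangle$ contains three $0$-lines, is all $0$-points, and $m$ stays on the $P_0$-side --- or $m$ carries two points on $0$-lines through the opposite point $P_0'$, which forces $\langle P_0',m\rangle$ to be all $0$-points and yields a contradiction. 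This shows every $0$-line through a point of $A$ lies in $A$, and the count ($\frac{q^k-1}{q-1}$ points, each on $\frac{q^{k-1}-1}{q-1}$ $0$-lines covering $q$ further points each) then forces any two points of $A$ to be joined by a $0$-line inside $A$, so $A$ is a $(k-1)$-space. You would need to supply an argument of this strength to make your Step 2 complete; the rest of your write-up (meeting every secant, disjointness from $D$ and from $T_2$) then goes through as you describe.
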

\begin{proof}
By Lemma \ref{lemmaq+1disjointq-1k} it is sufficient to prove that there exist two $(k-1)$-spaces in $\PG(2k-1,q)$ that both have a point in common with all $(q-1)$-secants to $D$.

Consider a $(q-1)$-secant $l_0$, and let $P_0$ and $P_0'$ be the 0-points on $l_0$. Let $l_i$ be a $(q-1)$-secant, different from $l_0$. The lines $l_0$ and $l_i$ span a $3$-space $\gamma$ and  since $D$ is a scattered $\F_2$-linear set, $\gamma\cap D$ is also a scattered $\F_2$-linear set. Since $\gamma$ contains $2(q-1)$ points of $D$ on the lines $l_i,l_j$ and $(q-1)^2$ points of $D$ defined in a unique way as a third point on the line $A_1 A_2$, with $A_1 \in l_0$, $A_2\in l_i$, we have that $|D\cap \gamma|=q^2-1$, and hence it is a maximum scattered linear set. By Theorem \ref{thmpseudo2}, we find that $\gamma\cap D$ is of pseudoregulus type. This means that it has transversal lines, say $m_i$ and $m_i'$, where $P_0$ lies on $m_i$ and $P_0'$ lies on $m_i'$. This holds for every $(q-1)$-secant $l_i$. Since there are exactly $\frac{q^k-1}{q-1}$ $(q-1)$-secants to $D$, which are mutually disjoint, there are exactly $2\frac{q^k-1}{q-1}$ $0$-points. We have proven that a $0$-point lies on $\frac{q^{k-1}-1}{q-1}$ lines full of $0$-points (call such lines {\em $0$-lines}) and on $\frac{q^k-1}{q-1}$ lines containing exactly $1$ other $0$-point. 

Let $A$ and $A'$ be the set of all points on the lines $m_i$ and $m_i'$ respectively. Then we will to show that $A\cup A'$ is the union of two disjoint $(k-1)$-spaces. 

Consider a line containing two $0$-points $P_1,P_2$, with $l_1$ and $l_2$ the $(q-1)$-secants through $P_1,P_2$. Then, as seen before, the intersection of the $3$-space spanned by $l_1$ and $l_2$ with $D$ is a linear set of pseudoregulus type, and hence the line $P_1P_2$ contains $2$ or $q+1$ $0$-points. This shows that every line in $\PG(2k-1,q)$ intersects $A \cup A'$ in $0,1,2$ or $q+1$ points.
This in turn implies that a plane with three $0$-lines only contains $0$-points.  
Consider now a point $P_3$ on a $0$-line through $P_0$, and consider a $0$-line $m\neq P_0P_3$ through $P_3$. If $m$ contains a point $P_4 \neq P_3$ such that $P_4P_0$ is a $0$-line through $P_0$, then we see that the plane $\langle P_0,m \rangle$ only contains $0$-points. In the other case, $M$ contains at least two $0$-points on $0$-lines through $P_0'$. In this case, all the points in the plane $\langle P_0', m \rangle$ are $0$-points, and hence the line $P_1P_0'$ is a $0$-line, a contradiction. 
So we find that every $0$-line through a $0$-point of $A$ is contained in $A$. Since every point of $A$ lies on $\frac{q^{k-1}-1}{q-1}$ $0$-lines, and $A$ contains $\frac{q^k-1}{q-1}$ $0$-points, we find that every $2$ points of $A$ are contained in a $0$-line of $A$. The same argument works for the set $A'$.
This shows that $A$ forms a subspace and likewise $A'$ forms a subspace. Since $|A|=|A'|=\frac{q^k-1}{q-1}$, these subspaces are $(k-1)$-dimensional.

\end{proof}

\vspace{0.3cm}

\subsection{There exists a suitable Desarguesian $(k-1)$-spread $\S$ in $\PG(2k-1,q)$}\label{sectiongoodspread}
Consider the scattered linear set $D\subset H_\infty$ of pseudoregulus type. Let $T_0$ and $T_\infty$ be the transversal $(k-1)$-spaces to the pseudoregulus defined by $D$ found in Theorem \ref{thmpseudo}. Now we want to show that there exists a Desarguesian $(k-1)$-spread $\S$ in $\PG(2k-1,q)$ such that $T_0,T_\infty \in \S$ and such that every other $(k-1)$-space of $\S$ has precisely one point in common with $D$. 

\begin{lemma}\label{lemmajuistespread}
There exists a Desarguesian $(k-1)$-spread $\S$ in $\PG(2k-1,q)$,  such that $T_0,T_\infty \in \S$ and such that every other element of $\S$ has precisely one point in common with $D$.
\end{lemma}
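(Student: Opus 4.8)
The plan is to feed the pseudoregulus structure of $D$ into the explicit normal form of Result \ref{thmitaliaan}. Since $D$ is a maximum scattered $\F_2$-linear set of pseudoregulus type, I may write $D=L_{\rho,f}=\{(u,\rho f(u))_q\mid u\in U_0\setminus\{0\}\}$, where $U_0,U_\infty$ are the $k$-dimensional $\F_q$-subspaces of $V=U_0\oplus U_\infty$ corresponding to the transversal spaces $T_0,T_\infty$ produced by Theorem \ref{thmpseudo}, and $f\colon U_0\to U_\infty$ is an invertible $\sigma$-semilinear map with $\sigma\colon x\mapsto x^{2^s}$ in $\mathrm{Aut}(\F_q)$ and $\mathrm{Fix}(\sigma)=\F_2$; the latter condition is exactly $\gcd(s,h)=1$. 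Note every point of $D$ has both coordinates nonzero, so $D$ is disjoint from $T_0$ and $T_\infty$. The first observation is that giving a Desarguesian $(k-1)$-spread $\S$ of $\PG(2k-1,q)$ with $T_0,T_\infty\in\S$ is the same as equipping $V$ with an $\F_{q^k}$-structure extending the $\F_q$-structure in which $U_0,U_\infty$ become $1$-dimensional; concretely this is a pair of $\F_q$-linear isomorphisms $\phi_0\colon\F_{q^k}\to U_0$ and $\phi_\infty\colon\F_{q^k}\to U_\infty$, and then $\S$ consists of $T_0$, $T_\infty$ and the elements $s_c=\{(\phi_0(t),\phi_\infty(ct))\mid t\in\F_{q^k}\}$ for $c\in\F_{q^k}^*$. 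Such a spread is automatically Desarguesian, being the spread of $\PG(1,q^k)$ under the identification $V\cong\F_{q^k}^2$. My goal is to choose $\phi_0,\phi_\infty$ so that each $s_c$ meets $D$ in exactly one point.

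Next I would reduce this to a single field-theoretic condition. Set $g=\phi_\infty^{-1}\circ(\rho f)\circ\phi_0\colon\F_{q^k}\to\F_{q^k}$; then $g$ is additive and $\sigma$-semilinear over $\F_q$, and the point of $D$ with parameter $u=\phi_0(x)$ is $\langle(\phi_0(x),\phi_\infty(g(x)))\rangle_{\F_q}$. Because $\F_2^*=\{1\}$, the assignment $x\mapsto\langle(\phi_0(x),\phi_\infty(g(x)))\rangle_{\F_q}$ is a bijection from $\F_{q^k}^*$ onto the $q^k-1$ points of $D$. A direct comparison of coordinates shows that this point lies on $s_c$ if and only if $c=g(x)/x$. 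Hence $|D\cap s_c|$ equals the number of $x\in\F_{q^k}^*$ with $g(x)/x=c$, and since there are $q^k-1$ values of $c$ and $q^k-1$ points of $D$, the requirement ``exactly one point of $D$ on every $s_c$'' is equivalent to the map $x\mapsto g(x)/x$ being a bijection of $\F_{q^k}^*$.

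To finish I would arrange for $g$ to be a Frobenius automorphism of $\F_{q^k}$. Any invertible $\sigma$-semilinear map $\hat\sigma\colon\F_{q^k}\to\F_{q^k}$ can be realised as $g$: fixing $\phi_0$ arbitrarily and putting $\phi_\infty=(\rho f)\circ\phi_0\circ\hat\sigma^{-1}$, the $\sigma$-semilinearity of $f$ makes $\phi_\infty$ an $\F_q$-linear bijection, and by construction $g=\hat\sigma$. I would take $\hat\sigma\colon x\mapsto x^{2^t}$ with $t\equiv s\pmod h$ and $\gcd(t,hk)=1$, that is, a generator of $\mathrm{Gal}(\F_{q^k}/\F_2)$ restricting to $\sigma$ on $\F_q$. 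Then $g(x)/x=x^{2^t-1}$, and since $\gcd(2^t-1,\,q^k-1)=2^{\gcd(t,hk)}-1=1$, the map $x\mapsto x^{2^t-1}$ is a bijection of $\F_{q^k}^*$. This produces the desired spread $\S$: it contains $T_0,T_\infty$ and every other element $s_c$ meets $D$ in precisely one point.

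The hard part is the number-theoretic existence of the exponent $t$, i.e. the fact that $\sigma$ extends to a generator of $\mathrm{Gal}(\F_{q^k}/\F_2)$; this is precisely where $\gcd(s,h)=1$ is needed. I would argue by the Chinese Remainder Theorem: for a prime $p\mid hk$ with $p\mid h$, the congruence $t\equiv s\pmod h$ already forces $p\nmid t$ because $\gcd(s,h)=1$; for a prime $p\mid hk$ with $p\nmid h$ the residue of $t$ modulo $p$ is unconstrained, so one can keep it nonzero. Combining these congruences yields $t\equiv s\pmod h$ with $\gcd(t,hk)=1$. Everything else — checking that the $s_c$ are genuine spread elements, the coordinate identity pinning each point of $D$ to a unique $s_c$, and the verification that $\phi_\infty$ is $\F_q$-linear — is routine linear algebra over $\F_q$ and $\F_{q^k}$, so I expect the only real subtlety to lie in this extension-of-automorphism step.
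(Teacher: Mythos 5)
Your proposal is correct and follows essentially the same route as the paper: both start from the normal form of Result \ref{thmitaliaan}, realise the spread explicitly as $T_0,T_\infty$ together with the sets $\{(\alpha u,\alpha u^{2^t})_q\mid\alpha\in\F_{q^k}^*\}$, and reduce the ``exactly one point of $D$ per element'' condition to $\gcd(2^t-1,2^{hk}-1)=2^{\gcd(t,hk)}-1=1$. The only divergence is that the paper obtains an exponent coprime to $hk$ by citing the equivalence classification \cite[Theorem 3.7]{italiaan}, whereas you derive it by hand through the choice of $\phi_\infty$ and the CRT lifting of $\sigma$ to a generator of $\mathrm{Gal}(\F_{q^k}/\F_2)$; that self-contained step is correct.
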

\begin{proof}
We prove this lemma using the representation of Result \ref{thmitaliaan}. 
By \cite[Theorem 3.7]{italiaan} we find that the linear sets $L_{\rho,f}$ and $L_{\rho',g}$ are equivalent if and only if $\sigma_f=\sigma_g^{\pm 1}$, where $\sigma_f$ and $\sigma_g$ are the automorphisms associated with $f$ and $g$ respectively. 
Hence, up to equivalence, we may suppose that $\rho=1$ and	 $f:\mathbb{F}_{q^k} \rightarrow \mathbb{F}_{q^k}: t\rightarrow t^{2^i}$, $\gcd(i,hk)=1$. 

Considering $U_0,U_\infty$ as $\F_{q^k}$, it follows that $D$ is equivalent to the set of points $P_u$ with  $$P_u:=(u,u^{2^i})_q,u\in\mathbb{F}_{q^k}^*.$$
The transversal spaces $T_0$ and $T_\infty$ are the point sets $T_0=\{(u,0)|u\in \mathbb{F}_{q^k}^*\}$ and $T_\infty=\{(0,u)|u\in \mathbb{F}_{q^k}^*\}$.

Consider now the set $\S_0$ of $(k-1)$-spaces $T_u$, $u\in \mathbb{F}_{q^k}^*$ with
\begin{align}
    T_u:=\{(\alpha u, \alpha u^{2^i})_{q}|\alpha\in\mathbb{F}_{q^k}^*\}.    
\end{align}

We will show that the set $\S=\S_0\cup \{T_0,T_\infty\}$ is a $(k-1)$-spread of $\PG(2k-1,q)$. 
Suppose that $P=T_{u_1}\cap T_{u_2}$, for some $u_1, u_2\notin \{0, \infty\}$, then there exists elements $\alpha_1,\alpha_2 \in \F_{q^k}^*, \mu \in \F_{q}^*$ such that

\begin{align}\label{stelselvgl}
\left \{ 
\begin{array}{ll}
\alpha_1 u_1&=\mu \alpha_2 u_2 \\
\alpha_1 u_1^{2^i}&=\mu \alpha_2 u_2^{2^i} \end{array}\right.
\end{align}
With $\mu\in \mathbb{F}_q^*$. This implies that $u_1^{2^i-1}=u_2^{2^i-1}$ or $\left(\frac{u_1}{u_2}\right)^{2^i}=\frac{u_1}{u_2}$. Hence $\frac{u_1}{u_2}\in \F_{2^i}\cap \F_{2^{hk}}$ which is $\F_2$ since $\gcd(i,hk)=1$. Since $u_1,u_2\in \F_{q^k}^*$, this implies that $u_1=u_2$, and that $T_{u_1}=T_{u_2}$. In particular, we see that $T_u \neq T_{u'}$ for $u\neq u' \in \F_{q^k}^*$. Since $T_0$ and $T_\infty$ are distinct from $T_u$ for all $u\in \F_{q^k}^*$, we obtain that $|\S|=q^k+1$.

We will now show that $T_u\cap T_0=\emptyset$ for all $u\in \F_{q^k}^*$. If $P=T_u\cap T_0$, $u\notin \{0, \infty\}$ for some $u\in \mathbb{F}_{q^k}^*$ then $P=(u',0)_q$ with $u'\in \F_{q^k}^*$ and 
\begin{align*}
\left \{ 
\begin{array}{ll}
\alpha u&=\mu  u' \\
\alpha u^{2^i}&=0 \end{array}\right.
\end{align*}
for some $\mu\in \mathbb{F}_q^*$ and $\alpha\in \mathbb{F}_{q^k}^*$. The second equality gives a contradiction since $u\neq 0 \neq \alpha$. Hence $T_u \cap T_0 = \emptyset$. It follows from a similar argument that $T_u \cap T_\infty=\emptyset$.
This shows that $\mathcal{S}$ is a spread which is Desarguesian as seen in Subsection \ref{subsectionlinear}.

\end{proof}

\begin{remark}
 In \cite[Theorem 3.11(i)]{italiaan} a geometric construction of the Desarguesian spread, found in Lemma \ref{lemmajuistespread}, using indicator sets, is given. 
\end{remark}

\subsection{The point set $\Q$ defines a translation hyperoval in the Andr\'e/Bruck-Bose plane $\mathcal{P}(S)$}\label{sectionhyperoval}
The spread $\S$ found in Lemma \ref{lemmajuistespread} defines a projective plane $\P(\S)=\Pi_{q^k}\cong \PG(2,q^k)$ by the Andr\'e/Bruck-Bose construction. The transversal $(k-1)$-spaces $T_0, T_\infty \in \S$ to the pseudoregulus associated with $D$ correspond to points $P_0, P_\infty$ contained in the line $\ell_\infty$ at infinity of $\PG(2,q^k)$.

\begin{theorem}\label{omgekeerd}
The set $\mathcal{Q}$, together with $T_0$ and $T_\infty$, defines a translation hyperoval in $\Pi_{q^k}\cong \PG(2,q^k)$.\end{theorem}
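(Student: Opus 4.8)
The plan is to make the coordinate description from Lemma \ref{lemmajuistespread} fully explicit and then reduce to the standard model of a translation hyperoval recalled in the introduction. Writing $\PG(2k,q)=\PG(\F_q\oplus\F_{q^k}\oplus\F_{q^k})$ with points $(x_0;v_1,v_2)$, the hyperplane $H_\infty=\{x_0=0\}$ carries the $\F_{q^k}$-structure for which $\S$ consists of the lines $T_u=\F_{q^k}(u,u^{2^i})$ together with $T_0=\F_{q^k}(1,0)$ and $T_\infty=\F_{q^k}(0,1)$, while $D=\{(u,u^{2^i})_q\mid u\in\F_{q^k}^*\}$. Since $\S$ is Desarguesian, the Bruck--Bose plane $\P(\S)$ is $\PG(2,q^k)$ with affine points identified with $\F_{q^k}^2$ via $(1;v_1,v_2)\mapsto(1:v_1:v_2)$, with ordinary $\F_{q^k}$-lines, and with $\ell_\infty$ parametrised by $T_u\mapsto(0:u:u^{2^i})$; in particular $T_0=(0:1:0)$ and $T_\infty=(0:0:1)$. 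It therefore suffices to show that $\Q\cup\{T_0,T_\infty\}$ is $\PGL$-equivalent to $\{(1:t:t^{2^i})\mid t\in\F_{q^k}\}\cup\{(0:1:0),(0:0:1)\}$, which is a translation hyperoval precisely because $\gcd(i,hk)=1$.

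Next I would pin down the shape of $\Q$ inside $\F_{q^k}^2$. For two points $(v_1,v_2),(w_1,w_2)\in\Q$ the determined direction is the $\F_q$-point $(v_1-w_1,v_2-w_2)_q$, which by hypothesis lies in $D$; as no point of $D$ has a zero coordinate, both coordinates of the difference are nonzero. In particular the first coordinates of the points of $\Q$ are pairwise distinct, so $\Q=\{(t,g(t))\mid t\in\F_{q^k}\}$ for a function $g:\F_{q^k}\to\F_{q^k}$. Applying an affine translation (an elation of $\PG(2,q^k)$ fixing $\ell_\infty$ pointwise, hence fixing $T_0$ and $T_\infty$), I may assume $g(0)=0$. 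Writing $(v_1-w_1,v_2-w_2)=\lambda(u,u^{2^i})$ with $\lambda\in\F_q^*$ and using $(t-s)^{2^i}=t^{2^i}-s^{2^i}$ in characteristic $2$, the direction condition becomes: for all $t\neq s$ in $\F_{q^k}$, $\frac{g(t)-g(s)}{(t-s)^{2^i}}\in\F_q^*$.

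The heart of the argument, and the step I expect to be the main obstacle, is to upgrade this ``difference quotient in $\F_q^*$'' condition to the global statement $g(t)=a\,t^{2^i}$ for a single constant $a\in\F_q^*$. Taking $s=0$ gives $g(t)=c(t)\,t^{2^i}$ with $c(t)\in\F_q^*$ for $t\neq0$. Substituting this back and writing the quotient as $\frac{c(t)t^{2^i}-c(s)s^{2^i}}{t^{2^i}-s^{2^i}}=c(t)+\frac{(c(t)-c(s))\,s^{2^i}}{t^{2^i}-s^{2^i}}$, one sees that whenever $c(t)\neq c(s)$ the remainder term forces $(t/s)^{2^i}\in\F_q$. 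An order computation then gives $t/s\in\F_q^*$: if $r=t/s$ then $r^{2^i}\in\F_q$ yields $r^{2^i(q-1)}=1$, and since $r\in\F_{q^k}^*$ also $r^{q^k-1}=1$, so the order of $r$ divides $\gcd\!\big(2^i(q-1),\,q^k-1\big)=q-1$ (as $q^k-1$ is odd and $q-1\mid q^k-1$), whence $r\in\F_q^*$. Thus $c(t)\neq c(s)$ implies $t,s$ lie in the same coset of $\F_q^*$ in $\F_{q^k}^*$; equivalently, $c$ agrees on elements of distinct cosets. Since $k\geq2$ there are $\tfrac{q^k-1}{q-1}\geq q+1\geq3$ such cosets, so any two elements either already lie in distinct cosets or can be compared through a third element in a distinct coset, forcing $c$ to be constant. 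Hence $g(t)=a\,t^{2^i}$ with $a\in\F_q^*$.

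Finally, the diagonal collineation $\mathrm{diag}(1,1,a^{-1})\in\PGL(3,q^k)$ fixes $T_0=(0:1:0)$ and $T_\infty=(0:0:1)$ and maps $\Q=\{(1:t:a\,t^{2^i})\}$ onto $\{(1:t:t^{2^i})\}$. Therefore $\Q\cup\{T_0,T_\infty\}$ is $\PGL$-equivalent to the standard model above and is a translation hyperoval, since $\gcd(i,hk)=1$. The only genuinely delicate point is the coset/order computation showing that $c$ is constant; the remaining steps are bookkeeping in the explicit coordinates furnished by Lemma \ref{lemmajuistespread}.
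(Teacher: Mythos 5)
Your proof is correct, but it follows a genuinely different route from the paper's. The paper argues synthetically: a line of $\P(\S)$ through $P_0$ or $P_\infty$ corresponds to a $k$-space through $T_0$ or $T_\infty$, which contains no direction of $D$ and hence at most one point of $\Q$; any other line corresponds to a $k$-space through a spread element meeting $D$ in exactly one point, which by Lemma \ref{no3collinear} contains at most two points of $\Q$; the count $q^k+2$ then gives a hyperoval, and the translation property is read off from Theorem \ref{linearset} --- the elations of $\PG(2hk,2)$ with axis $\tilde{H}_\infty$ that preserve the affine subspace $\tilde{\Q}$ act transitively on it and induce elations of $\PG(2,q^k)$ with axis $\ell_\infty$. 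You instead push the coordinates of Lemma \ref{lemmajuistespread} all the way through: since no point of $D$ has a zero coordinate, $\Q$ is the graph of a function $g$, the hypothesis that all determined directions lie in $D$ becomes the difference-quotient condition $\frac{g(t)-g(s)}{(t-s)^{2^i}}\in\F_q^*$, and you solve this by the coset/order argument to get $g(t)=at^{2^i}$ and an explicit $\PGL$-equivalence with the standard model. Your key steps check out: the computation $\gcd\bigl(2^i(q-1),q^k-1\bigr)=q-1$ is right because $q^k-1$ is odd, and already two cosets of $\F_q^*$ in $\F_{q^k}^*$ (guaranteed by $k\geq 2$) force $c$ to be constant. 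What your route buys is a stronger, fully explicit conclusion --- $\Q$ itself is put in the canonical form $\{(1,t,t^{2^i})\}$, which in passing re-derives the parametrisation used for the ``vice versa'' direction in Section \ref{sectionterugkeer}. The cost is a heavier reliance on the normalisation $f(t)=t^{2^i}$ coming from Result \ref{thmitaliaan} (via an equivalence of $H_\infty$ that you must extend to $\PG(2k,q)$, a routine step you leave implicit) and on the compatibility of the field-reduction spread with the identification $\P(\S)\cong\PG(2,q^k)$; the paper's synthetic argument needs none of this.
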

\begin{proof} 
Let $\A$ be the set of points in $\Pi_{q^k}$ corresponding to the point set $\mathcal{Q}$ of $\Pi_q$. Recall that $T_0$ corresponds to a point $P_0$ and $T_\infty$ to a point $P_\infty$, contained in the line $\ell_\infty$ of $\Pi_{q^k}$. We first show that every line in $\PG(2,q^k)$ contains at most $2$ points of the set $\mathcal{H}=\mathcal{A}\cup P_0\cup P_\infty$.

\begin{itemize}
    \item The line $\ell_\infty$ at infinity only contains the points $P_0$ and $P_\infty$.
    \item Consider a line $l\neq \ell_\infty$ through $P_0$ in $\PG(2,q^k)$. This line corresponds to a $k$-space through $T_0$ in $\PG(2k,q)$. As $P_0\in l\cap \mathcal{H}$, we have to show that this $k$-space contains at most one affine point of $\Q$. If this space would contain $2$ (or more) affine points $X_1,X_2\in \Q$, then they would define a direction of $D$ at infinity in $T_0$. But this is impossible as $T_0$ has no points of $D$, see Corollary \ref{rechtedoor20is0}. This argument also works for the lines through $P_\infty$, different from $\ell_\infty$.
    \item Consider a line $l$ through a point $P_i$, $i\notin \{0,\infty\}$ at infinity. This point $P_i$ corresponds to an element $T_i\in \S$ that intersects the pseudoregulus $D$ in a unique point $X_i$. The line $l$ corresponds to a $k$-space $\gamma$ in $\PG(2k,q)$ through $T_i$. 
    Suppose that $\gamma$ contains at least $3$ points from $\Q$, say $X,Y,Z$. By Lemma \ref{no3collinear} these points are not collinear, hence they determine at least two different points of $D$ which are contained in $T_i$, a contradiction. This proves that $\gamma$ contains at most two points of $\Q$, which implies that the line $l$ contains at most two points of $\mathcal{A}$.
\end{itemize}
Since $\mathcal{H}$ has size $q^k+2$, it follows that $\mathcal{H}$ is a hyperoval. 

Finally consider the group $G$ of elations in $\PG(2hk,2)$ with axis the hyperplane at infinity $\tilde{H}_\infty$. Since the points of $\Tilde{\mathscr{Q}}$ form a subspace, we see that $G$ acts transitively on the points of $\Tilde{\Q}$. Every element of $G$ induces an element of the group $G'$ of elations in $\PG(2,q^k)$ with axis the line $P_0P_\infty$. Hence, $G'$ acts transitively on the points of $\A$ in $\PG(2,q^k)$. This shows that $\mathcal{H}$ is a translation hyperoval.

\end{proof}

\subsection{Every translation hyperoval defines a linear set of pseudoregulus type}\label{sectionterugkeer}

 In this section, we show that the vice versa part of Theorem \ref{main} holds.

\begin{proposition} Via the Andr\'e/Bruck-Bose construction, the set of affine points of a translation hyperoval in $\PG(2,q^k)$, $q=2^h$, where $h,k\geq 2$ corresponds to a set $\Q$ of $q^k$ affine points in $\PG(2k,q)$ whose set of determined directions $D$ is an $\F_2$-linear set of pseudoregulus type.
\end{proposition}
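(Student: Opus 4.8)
The plan is to reduce to the canonical algebraic model of a translation hyperoval and then recognise the set of determined directions as the explicit $\F_2$-linear set $L_{1,f}$ appearing in Result~\ref{thmitaliaan}, essentially reversing the computation carried out in Lemma~\ref{lemmajuistespread}.

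First I would invoke the classification recalled in the introduction: every translation hyperoval of $\PG(2,q^k)$ is $\PGL$-equivalent to $\mathcal{H}=\{(1,t,t^{2^i})\mid t\in\F_{q^k}\}\cup\{(0,1,0),(0,0,1)\}$, where $q^k=2^{hk}$ and $\gcd(i,hk)=1$. Since a $\PGL$-equivalence does not affect whether the associated direction set is of pseudoregulus type, I may assume $\mathcal{H}$ has this form, taking the distinguished bisecant to be $\ell_\infty=\{x_0=0\}$: the two points $(0,1,0),(0,0,1)$ lie on it, and the maps $(x_0,x_1,x_2)\mapsto(x_0,x_1+cx_0,x_2+c^{2^i}x_0)$ are elations with axis $\ell_\infty$ acting transitively on $\mathcal{H}\setminus\ell_\infty$. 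Performing the Andr\'e/Bruck-Bose construction with $\ell_\infty$ at infinity, the $q^k$ affine points of $\mathcal{H}$ become the set $\Q$ of affine points of $\PG(2k,q)$, while $H_\infty=\PG(2k-1,q)$ is identified with $\PG(\F_{q^k}^2)$ over $\F_q$.

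Next I would compute the directions. In $AG(2k,q)\cong\F_{q^k}^2$ the line through $(1,s,s^{2^i})$ and $(1,t,t^{2^i})$ has direction vector $(t-s,(t-s)^{2^i})$, using that Frobenius is additive in characteristic two. Writing $u=t-s$, this gives
\begin{align*}
D=\{\,\langle (u,u^{2^i})\rangle_{\F_q}\mid u\in\F_{q^k}^*\,\}\subset \PG(2k-1,q).
\end{align*}
A short argument using $\gcd(i,h)=1$ (so that $\lambda=\lambda^{2^i}$ forces $\lambda\in\F_2$) shows the map $u\mapsto\langle(u,u^{2^i})\rangle_{\F_q}$ is injective, whence $|D|=q^k-1$. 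The set $U=\{(u,u^{2^i})\mid u\in\F_{q^k}\}$ is an $\F_2$-subspace of $\F_{q^k}^2$ of $\F_2$-dimension $hk$ which is not $\F_q$-linear, so $D=L_U$ is a maximum scattered $\F_2$-linear set; in the notation of Result~\ref{thmitaliaan} it is exactly $L_{1,f}$ with $f\colon u\mapsto u^{2^i}$ and companion automorphism $\sigma\colon x\mapsto x^{2^i}$ satisfying $\mathrm{Fix}(\sigma)=\F_{2^{\gcd(i,hk)}}=\F_2$.

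Finally I would conclude that $D$ is of pseudoregulus type, for which the quickest route is to exhibit the pseudoregulus directly. The $(k-1)$-spaces $T_0=\PG(\F_{q^k}\oplus 0)$ and $T_\infty=\PG(0\oplus\F_{q^k})$ are disjoint from $D$, and for each $\langle a\rangle_{\F_q}\in\PG(k-1,q)$ the line joining $\langle(a,0)\rangle_{\F_q}\in T_0$ to $\langle(0,a^{2^i})\rangle_{\F_q}\in T_\infty$ meets $D$ in exactly the $q-1$ points $\{\langle(\lambda a,(\lambda a)^{2^i})\rangle_{\F_q}\mid\lambda\in\F_q^*\}$. This produces $\tfrac{q^k-1}{q-1}$ pairwise disjoint $(q-1)$-secants, each meeting both $T_0$ and $T_\infty$, which gives condition~(1) and the existence half of condition~(2) of the definition; that these are the only two transversal spaces follows from the maximality of the scattered set together with \cite{italiaan}, so that Result~\ref{thmitaliaan} applies in the reverse direction. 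The main obstacle I anticipate is purely the coordinate bookkeeping of the Andr\'e/Bruck-Bose correspondence: ensuring that the $\F_{q^k}$-directions computed in $\PG(2,q^k)$ are translated into the correct $\F_q$-point set $D$ in $\PG(2k-1,q)$, and that the passage from $\F_{q^k}^*$-scalars to $\F_q^*$-scalars is handled so that $|D|=q^k-1$. Once the explicit form of $D$ is in hand, the pseudoregulus structure reads off essentially for free.
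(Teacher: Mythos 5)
Your proposal is correct and follows essentially the same route as the paper: reduce to the canonical form $\{(1,t,t^{2^i})\}$, compute the direction set as $D=\{(u,u^{2^i})_q \mid u\in\F_{q^k}^*\}$ using additivity of the Frobenius in characteristic two, and identify this with the form $L_{\rho,f}$ of Result~\ref{thmitaliaan}. The extra detail you supply (injectivity of $u\mapsto\langle(u,u^{2^i})\rangle_{\F_q}$ and the explicit exhibition of the pseudoregulus lines and transversal spaces) is a harmless, slightly more self-contained elaboration of the paper's one-line appeal to Result~\ref{thmitaliaan}.
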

\begin{proof} 	Consider a translation hyperoval $H\in \PG(2,q^k)$. Without loss of generality we may suppose that $H=\{(1,t,t^{2^i})_{q^k} | t\in \mathbb{F}_{q^k}\} \cup \{(0,1,0)_{q^k},(0,0,1)_{q^k}\}$ with $\gcd(i,hk)=1$. The set of affine points of $H$ corresponds to the set of points $H'=\{(1,t,t^{2^i})_{q} \in \mathbb{F}_q \oplus \mathbb{F}_{q^k} \oplus \mathbb{F}_{q^k} | t\in \mathbb{F}_{q^k}\}$ in $\PG(2k,q)$ (for more information about the use of these coordinates for $H$ and $H'$, see \cite{johnsarageertrui}).  The determined directions in the hyperplane at infinity $H_\infty: X_0=0$ have coordinates $(0,t_1-t_2,t_1^{2^i}-t_2^{2^i})_q$ where $t_1,t_2 \in \mathbb{F}_{q^k}$. So the set $D=\{(0,u,u^{2^i} ) _q| u\in \mathbb{F}_{q^k}   \}$ is precisely the set of directions determined by the points of $H$. By Result \ref{thmitaliaan} we find that this set of directions $D$ is an $\F_2$-linear set of pseudoregulus type in the hyperplane $H_\infty$.
	
\end{proof}

We will now show that every line in $\PG(2k-1,q)$ intersects the points of the linear set $D$ in $0,1,3$ or $q-1$ points. 

\begin{proposition}
	Let $D$ be the set of points of an $\mathbb{F}_2$-linear set of pseudoregulus type in $\PG(2k-1,q)$, $q=2^h$, $h>2$, $k\geq 2$. Then every line of $\PG(2k-1,q)$ meets $D$ in $0,1,3$ or $q-1$ points.
\end{proposition}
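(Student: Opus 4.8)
The plan is to use the explicit coordinate description of the linear set from Result \ref{thmitaliaan}, since the previous proposition already reduced any $\F_2$-linear set of pseudoregulus type to the canonical form. Up to equivalence I would write $D=\{(u,u^{2^i})_q\mid u\in\mathbb{F}_{q^k}^*\}$ inside $H_\infty=\PG(2k-1,q)$, where $\gcd(i,hk)=1$, identifying $U_0$ and $U_\infty$ with $\mathbb{F}_{q^k}$. The key observation is that a point $(u,u^{2^i})_q$ lies on the line joining $(v,v^{2^i})_q$ and $(w,w^{2^i})_q$ precisely when there is an $\F_q$-linear relation among these three vectors, so counting points of $D$ on a line reduces to a question about the $\F_q$-dimension of a certain vector space built from the map $x\mapsto x^{2^i}$.

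First I would fix an arbitrary line $\ell$ of $\PG(2k-1,q)$ and ask how many points of $D$ it contains. Because $D$ is the projection under $u\mapsto(u,u^{2^i})$ of the nonzero scalars, the points of $D$ on $\ell$ correspond to those $u$ for which $(u,u^{2^i})$ lies in the $2$-dimensional $\F_q$-subspace $W$ of $\mathbb{F}_{q^k}\oplus\mathbb{F}_{q^k}$ defining $\ell$. Writing this subspace as the solution set of $\F_q$-linear conditions, membership becomes the condition that certain $\F_q$-linear combinations of $u$ and $u^{2^i}$ vanish, i.e.\ that $u$ is a common zero of one or two $\F_q$-linearized polynomials (additive polynomials) in $u$ and $u^{2^i}$. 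The set of such $u$ is an $\F_2$-subspace of $\mathbb{F}_{q^k}$, since $x\mapsto x^{2^i}$ is $\F_2$-linear (indeed $\F_{2^{\gcd(i,hk)}}=\F_2$-semilinear) and the defining conditions are $\F_q$-linear, hence $\F_2$-linear. Therefore the number of points of $D$ on $\ell$ is $\frac{2^d-1}{\gcd(q-1,\,2^d-1)}$ for some nonnegative integer $d$ equal to the $\F_2$-dimension of this intersection space, but more directly I would argue that the intersection $\ell\cap D$ is itself an $\F_2$-linear set on a line, and that scatteredness of $D$ forces the possible sizes.

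The cleanest route is to invoke the structure already established: $D$ is a \emph{maximum scattered} $\F_2$-linear set, and by the Remark after Theorem \ref{linearset} its $(q-1)$-secants are pairwise disjoint and correspond exactly to the pseudoregulus lines, each meeting $D$ in $\frac{q-1}{2-1}=q-1$ points. For a line $\ell$ not equal to a pseudoregulus line, I would show $\ell\cap D$ is a scattered $\F_2$-linear set \emph{on the line $\ell\cong\PG(1,q)$}, whose rank $d$ satisfies $d\le 2$ by Result \ref{thm1} applied with $r=2$, $n=h$; hence $\ell\cap D$ has rank $0,1$, or $2$, giving $0,1$, or $3$ points respectively (since an $\F_2$-linear set of rank $d$ on a line has at most $2^d-1$ points, with equality $1$ and $3$ in the scattered rank-$1$ and rank-$2$ cases). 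The remaining case, rank exceeding $2$ on $\ell$, cannot be scattered, so $\ell$ must be a whole pseudoregulus line and the count is $q-1$. This disposes of all four values.

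The main obstacle will be pinning down precisely why only these four values occur and nothing in between, i.e.\ ruling out intermediate sizes such as a line meeting $D$ in, say, $7$ points without being a full $(q-1)$-secant. The delicate point is that $\ell\cap D$ need not \emph{a priori} be a linear set on $\ell$ just because $D$ is one in the ambient space; I would need the standard fact that the intersection of an $\F_2$-linear set with a subspace is again an $\F_2$-linear set, together with the observation that any $\F_2$-subspace $\ell\cap D$ which is not scattered must have its defining subspace meet a spread element in a line, forcing $\ell$ to be a transversal/pseudoregulus line and the size to jump to $q-1$. Concretely, if $3<|\ell\cap D|<q-1$ then $\ell\cap D$ would be a scattered $\F_2$-linear set of rank $\ge3$ on a line, contradicting Result \ref{thm1}; so the entire argument hinges on correctly applying the rank bound and identifying when scatteredness fails.
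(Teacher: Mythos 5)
There is a genuine gap in the step you call the ``cleanest route''. Result \ref{thm1} applied to an $\F_2$-linear set on a line $\ell\cong\PG(1,q)=\PG(1,2^h)$ has $r=2$ and $n=h$, so it bounds the rank of a scattered linear set on $\ell$ by $rn/2=h$, not by $2$. A scattered $\F_2$-linear set of rank $3$ on $\PG(1,2^h)$ is perfectly possible for $h\geq 3$ (and the paper assumes $h>2$), so the rank bound does not rule out a line meeting $D$ in $7$ points, or more generally in $2^d-1$ points for any $d\leq h$. Your final paragraph correctly identifies this as the delicate point, but the resolution you offer --- ``rank $\geq 3$ on a line contradicts Result \ref{thm1}'' --- is exactly the misapplication. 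Note also that the conclusion you are trying to prove is not a consequence of maximum scatteredness alone: it genuinely uses that $D$ is of \emph{pseudoregulus type}, i.e.\ that the defining map $f$ is semilinear with companion automorphism $\sigma$ satisfying $Fix(\sigma)=\{0,1\}$. Any argument that only invokes ``maximum scattered'' plus a rank bound cannot close this gap.

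The computation you sketch in your first paragraph is in fact the right one, and is what the paper carries out: take two points $R_1=(u_1,f(u_1))_q$ and $R_2=(u_2,f(u_2))_q$ of $D$ on distinct pseudoregulus lines, write the condition for a third point $R_3=(u_3,f(u_3))_q$ to lie on $R_1R_2$ as an $\F_q$-linear relation, and use semilinearity to derive $(\lambda^\sigma-\lambda)^{\sigma^{-1}}u_2=(\mu-\mu^{\sigma^{-1}})u_3$ with $\langle u_2\rangle_q\neq\langle u_3\rangle_q$, forcing $\lambda,\mu\in Fix(\sigma)=\{0,1\}$ and hence a \emph{unique} third point of $D$ on $R_1R_2$. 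This shows every line through two points of $D$ on different pseudoregulus lines is a $3$-secant, while two points on the same pseudoregulus line span that line, a $(q-1)$-secant; together with the trivial cases this gives $0,1,3,q-1$. You set up this reduction correctly but then abandoned it; to repair the proof you need to carry it through, since it is precisely where the hypothesis $Fix(\sigma)=\{0,1\}$ enters.
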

\begin{proof} 
We use the representation of Result \ref{thmitaliaan} for the points of $D$. Let $R_1=(u_1,f(u_1))_q$ and $R_2=(u_2,f(u_2))_q$, $u_1,u_2 \in U_0$, be two points of $D$ not on the same line of the pseudoregulus, so the vectors $\langle u_1 \rangle$ and $\langle u_2 \rangle$ in $V(k,q)$ are not an $\mathbb{F}_q$-multiple (in short $\langle u_1 \rangle_q \neq \langle u_2 \rangle_q$). Recall that $f$ is a invertible semilinear map with automorphism $\sigma \in Aut(\mathbb{F}_q)$, $Fix(\sigma)=\{0,1\}$. A third point $R_3=(u_3,f(u_3))_q\in D$ is contained in $R_1R_2$ if and only if there are $\mu,\lambda\in \F_q$ such that
 \begin{align*}
     \left \{ 
     \begin{array}{ll}
        u_1+\lambda u_2&=\mu u_3 \\
     f(u_1)+\lambda f(u_2) &= \mu f(u_3) 
     \end{array}
       \right.
 \Leftrightarrow
    \left \{ \begin{array}{ll}
     f(u_1)+\lambda^\sigma f(u_2)&=\mu^\sigma f(u_3) \\
    f(u_1)+\lambda f(u_2) &= \mu f(u_3)
 \end{array} \right.\\
\Leftrightarrow
 \left \{ \begin{array}{ll}
     u_1+\lambda u_2&=\mu u_3 \\
    (\lambda^\sigma - \lambda) f(u_2) &= f((\mu-\mu^{\sigma^{-1}})u_3)
\end{array} \right.
\Leftrightarrow
    \left \{ \begin{array}{ll}
     u_1+\lambda u_2&=\mu u_3 \\
    (\lambda^\sigma - \lambda)^{\sigma^{-1}} u_2 &= (\mu-\mu^{\sigma^{-1}})u_3
\end{array} \right.
 \end{align*}
As $R_2$ and $R_3$ lie on different $(q-1)$-secants to $D$, we have that $\langle u_2 \rangle_q \neq \langle u_3 \rangle_q$. It follows that $\lambda^\sigma - \lambda=\mu-\mu^{\sigma^{-1}}=0$, so $\lambda,\mu\in Fix(\sigma)=\{0,1\}$. We find that there is only one solution of this system, such that $R_1\neq R_3$ (i.e. $\langle u_1 \rangle_q \neq \langle u_3 \rangle_q$), namely when $\lambda=\mu=1$. Hence, given two points $R_1,R_2$ in $D$, there is an unique point $R_3\in D\cap R_1R_2$, different from $R_1$ and $R_2$.

\end{proof}

\section{The generalisation of a characterisation of Barwick and Jackson}\label{veralgemening}

Using Theorem \ref{main}, we are now able to generalise the following result of Barwick-Jackson which concerns translation hyperovals in $\PG(2,q^2)$ (\cite{BJeven}). 

 \begin{result}\cite[Theorem 1.2]{BJeven} \label{BJ}Consider $\PG(4,q)$, $q$ even, $q>2$, with the hyperplane at infinity denoted by $\Sigma_\infty$. Let $\C$ be a set of $q^2$ affine points, called $\mathcal{C}$-points and consider a set of planes called $\mathcal{C}$-planes which satisfies the following:
\begin{itemize}
\item[(A1)] Each $\C$-plane meets $\C$ in a $q$-arc.
\item[(A2)] Any two distinct $\C$-points lie in a unique $\C$-plane.
\item[(A3)] The affine points that are not in $\C$ lie on exactly one $\C$-plane.
\item[(A4)] Every plane which meets $\C$ in at least $3$ points either meets $\C$ in $4$ points or is a $\C$-plane.
\end{itemize}
Then there exists a Desarguesian spread $\mathcal{S}$ in $\Sigma_\infty$ such that in the Bruck-Bose plane $\mathcal{P}(\mathcal{S})\cong \PG(2,q^2)$, the $\C$-points, together with $2$ extra points on $\ell_\infty$ form a translation hyperoval in $\PG(2,q^2)$.

\end{result}

\begin{remark} At two different points, the proofs of \cite{BJeven} are inherently linked to the fact that they are dealing with hyperovals in $\PG(2,q^2)$. In \cite[Lemma 4.1]{BJeven} the authors show the existence of a design which is isomorphic to an affine plane, of which they later need to use the parallel classes. In \cite[Theorem 4.11]{BJeven}, they use the Klein correspondence to represent lines in $\PG(3,q)$ in $\PG(5,q)$. Both techniques cannot be extended in a straightforward way to $q^k$, $k>2$.
\end{remark}

The following Proposition shows that a set of $\C$-planes as defined by Barwick and Jackson in \cite{BJeven} (using $\PG(2k,q)$ instead of $\PG(4,q)$) satisfies the conditions of Theorem \ref{main}.
\begin{proposition} \label{prop}Consider $\PG(2k,q)$, $q$ even, $q>2$, with the hyperplane at infinity denoted by $\Sigma_\infty$. Let $\C$ be a set of $q^k$ affine points, called $\mathcal{C}$-points and consider a set of planes called $\mathcal{C}$-planes which satisfies the following:
\begin{itemize}
\item[(A1)] Each $\C$-plane meets $\C$ in a $q$-arc.
\item[(A2)] Any two distinct $\C$-points lie in a unique $\C$-plane.
\item[(A3)] The affine points that are not in $\C$ lie on exactly one $\C$-plane.
\item[(A4)] Every plane which meets $\C$ in at least $3$ points either meets $\C$ in $4$ points or is a $\C$-plane.
\end{itemize} Then $\C$ determines a set of $q^k-1$ directions $D$ in $\Sigma_\infty$ such that every line of $\Sigma_\infty$ meets $D$ in $0,1,3$ or $q-1$ points.
\end{proposition}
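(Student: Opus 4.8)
The plan is to extract from axioms (A1)--(A4) exactly the two statements that feed Theorem~\ref{main}: that $|D|=q^k-1$ and that every line of $\Sigma_\infty$ meets $D$ in $0,1,3$ or $q-1$ points. The starting observation is that \emph{no three $\C$-points are collinear}: if $P_1,P_2,P_3$ were collinear, the unique $\C$-plane through $P_1,P_2$ (axiom (A2)) would contain the whole line $P_1P_2$, hence $P_3$, contradicting (A1) that a $\C$-plane meets $\C$ in a $q$-arc. Note also that (A4) forces every plane to meet $\C$ in $0,1,2,4$ or $q$ points. The key lemma I would prove next concerns a \emph{$4$-point plane} $\pi$, i.e.\ one with $\pi\cap\C=\{A,B,C,E\}$: I claim that all three diagonal points of this quadrangle lie on $m:=\pi\cap\Sigma_\infty$. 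Indeed a diagonal point $X=AB\cap CE$ satisfies $X\notin\C$ (else three $\C$-points are collinear), so if $X$ were affine it would lie on a unique $\C$-plane by (A3); but the $\C$-plane through $A,B$ contains the line $AB\ni X$ and the $\C$-plane through $C,E$ contains $CE\ni X$, so by uniqueness these two $\C$-planes coincide and contain all of $A,B,C,E$. This is impossible, since that plane is then either $\pi$ itself (not a $\C$-plane, as $q>4$) or meets $\pi$ in a line carrying four collinear points. Hence every diagonal point is at infinity, opposite sides of the quadrangle are parallel, and the six sides of $\pi$ determine exactly three directions on $m$.

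The heart of the argument is to show that every direction is \emph{perfectly matched}: for each $R\in D$ and each $\C$-point $P$, the line through $P$ in direction $R$ carries a second $\C$-point. Fix $R\in D$ and suppose $P$ is unmatched in direction $R$; since $|\C|=q^k$ is even and the matched points fall into pairs, the number of unmatched points is even, hence at least two, say $P,P'$. For any secant $UV$ of direction $R$ the plane $\langle P,U,V\rangle$ has at least three $\C$-points and \emph{cannot} be a $4$-point plane: by the quadrangle lemma the side opposite to $UV$ would be parallel to $UV$ and would match $P$. So $\langle P,U,V\rangle$ is a $\C$-plane, which forces $P$ (and likewise $P'$) into the unique $\C$-plane through $U,V$. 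As $P,P'$ determine a unique $\C$-plane $\sigma_0$ by (A2), it follows that \emph{every} secant of direction $R$ lies in $\sigma_0$. But then every $\C$-point outside $\sigma_0$ is unmatched, while the argument just given places all unmatched points inside $\sigma_0$; since $k\geq 2$ gives $q^k>q$, such outside points exist, a contradiction. Thus every $R\in D$ is perfectly matched, so each $\C$-point determines every direction of $D$; as a fixed $P$ determines exactly $q^k-1$ distinct directions (the $q^k-1$ lines $PP^{*}$ are distinct by non-collinearity), we conclude $|D|=q^k-1$.

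For the intersection numbers, fix a line $m\subset\Sigma_\infty$ and any $\C$-point $P$, and set $\pi=\langle P,m\rangle$, so that $\pi\cap\Sigma_\infty=m$ and $c:=|\pi\cap\C|\in\{1,2,4,q\}$. I claim $D\cap m$ is precisely the set of directions of the secants $PP^{*}$ with $P^{*}\in(\pi\cap\C)\setminus\{P\}$, and so has exactly $c-1$ points. The inclusion $\supseteq$ is immediate; for $\subseteq$, if $R\in D\cap m$ then by perfect matching $P$ lies on a secant of direction $R$, whose line $\langle P,R\rangle$ is contained in $\langle P,m\rangle=\pi$, so its second endpoint lies in $\pi\cap\C$. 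Finally the $c-1$ lines $PP^{*}$ are distinct and give $c-1$ distinct directions by non-collinearity. Hence $|D\cap m|=c-1\in\{0,1,3,q-1\}$, which is exactly what Theorem~\ref{main} requires.

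I expect the perfect-matching step to be the main obstacle: a priori a $q$-arc in a $\C$-plane need not pair up all its points in a prescribed secant direction, and one cannot settle this inside a single plane. The device that unlocks it is the quadrangle lemma, which converts the characteristic-$2$ collinearity of the diagonal points --- forced by (A3) to occur at infinity --- into genuine parallelism, so that an unmatched point is matched across any $4$-point plane; the only remaining escape, that all secants of a fixed direction collapse into one $\C$-plane, is killed by the count $q^k>q$. Proving the quadrangle lemma cleanly, and in particular excluding the degenerate case $q=4$ in which a $4$-point plane could itself be a $\C$-plane, is the delicate point, but it is harmless in the range $q\geq 16$ relevant to Theorem~\ref{main}.
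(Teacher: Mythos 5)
Your proof is correct and reaches exactly the two facts the paper needs ($|D|=q^k-1$ and the $\{0,1,3,q-1\}$ intersection pattern), but it organises the hard part differently. Both arguments rest on the same elementary observations: no three $\C$-points are collinear, a plane through a $\C$-point meets $\C$ in $1,2,4$ or $q$ points, and --- via the uniqueness in (A2) and (A3) --- the diagonal points of a planar quadrangle of $\C$-points lie in $\Sigma_\infty$. The paper exploits the quadrangle observation to prove directly that the direction sets $D_0$ and $D_1$ seen from two base points $P_0,P_1$ coincide; this is immediate for directions off the special line $L=\nu\cap\Sigma_\infty$ of the unique $\C$-plane $\nu$ through $P_0P_1$, but the directions on $L$ require a fairly delicate ad hoc chase (the points $X,Y,P_4,P_5,P_6,S,T$ in the paper's proof). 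Your ``perfect matching'' step replaces that chase entirely: the parity count of unmatched points, the observation that an unmatched point forces every secant of the given direction into the single $\C$-plane $\sigma_0$, and the inequality $q^k>q$ give a clean global contradiction. That is a genuinely different, and arguably tidier, route to the statement that every direction of $D$ is determined from every $\C$-point; after that both proofs finish the same way, by counting the $c-1$ directions inside $\langle P,m\rangle$ with $c\in\{1,2,4,q\}$. The one loose end is $q=4$, which the hypothesis $q>2$ permits: as written your quadrangle lemma uses $q>4$ to exclude that the $4$-point plane is itself a $\C$-plane. The repair is cosmetic --- state the lemma for $4$-point planes that are \emph{not} $\C$-planes (the proof then works for every $q$), and observe that in the matching step the remaining alternative, that $\langle P,U,V\rangle$ is a $4$-point plane which \emph{is} a $\C$-plane, already puts you in the case you want. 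With that adjustment your argument covers the full range $q>2$ of the proposition, not merely the $q\geq 16$ relevant to Theorem~\ref{main}.
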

\begin{proof} As before, we call the points that are not contained in $\Sigma_\infty$ {\em affine} points. Note that all $\C$-points are affine. Since every two $\C$-points lie on a $\C$-plane which meets $\C$ in a $q$-arc, we have that no three $\C$-points are collinear.

Let $P_0$ be a $\C$-point and let $D_0$ be the set of points of the form $P_0P_i\cap \Sigma_\infty$, where $P_i\neq P_0$ is a point of $\C$. We first show that every line meets $D_0$ in $0,1,3$ or $q-1$ points. Let $M$ be a line of $\Sigma_\infty$ containing $2$ points of $D_0$, say $R_1'=P_0R_1\cap \Sigma_\infty$, $R_2'=P_0R_2\cap \Sigma_\infty$, where $R_1,R_2\in \C$. Then $\langle M,P_0\rangle$ contains at least $3$ points of $\C$, and hence, by (A4), either it is a $\C$-plane or it contains exactly $4$ points of $\C$. If $\langle M,P_0\rangle$ is a $\C$-plane, it contains $q$ points of $\C$ forming a $q$-arc, and hence, $M$ contains $q-1$ points of $D_0$. Now suppose that $\langle M,P_0\rangle$ contains exactly $4$ $\C$-points, then $M$ contains $3$ points of $D_0$. 

Now let $P_1\neq P_0$ be a point of $\C$ and let $D_1$ be the set of points of the form $P_1P_i\cap \Sigma_\infty$, where $P_i\neq P_1$ is a point of $\C$. We claim that $D_0=D_1$. Let $P_1'=P_0P_1\cap \Sigma_\infty$. We see that $P_1'\in D_0\cap D_1$. Consider a point $P_2'\neq P_1'$ in $D_0$, then $P_0P_2\cap \Sigma_\infty=P_2'$ for some $P_2\in \C$. Consider the plane $\pi=\langle P_0,P_1,P_2\rangle$. 

Suppose first that $\pi$ is not a $\C$-plane, then, by (A4), $\pi$ contains exactly one extra point, say $P_3$ of $\C$. The lines $P_0P_1$ and $P_2P_3$ lie in $\pi$ and hence, meet in a point $Q$. By $(A2)$, there is a $\C$-plane $\mu$ through $P_0P_1$, and likewise, there is a $\C$-plane $\mu'$ through $P_2P_3$. Since $\pi$ is not a $\C$-plane, $\mu$ and $\mu'$ are two distinct  $\C$-planes through $Q$. By (A3) his implies that $Q$ is a point of $\Sigma_\infty$. Likewise, $P_0P_2\cap P_1P_3$ and $P_0P_3\cap P_1P_2$ are points of $\Sigma_\infty$. It follows that $D_0\cap \pi=D_1\cap \pi$. This argument shows that for all points $R\neq P_1'\in D_0$ such that $\langle P_0,P_1,R\rangle$ is not a $\C$-plane, we have that $R\in D_1$. Now $P_0P_1$ lies on a unique $\C$-plane, say $\nu$. Let $\nu \cap \Sigma_\infty=L$, then we have shown that $\langle P_0,P_1,R\rangle$ is not a $\C$-plane as long as $R\in \Sigma_\infty$ is not on $L$. We conclude that $D_0\setminus L=D_1\setminus L$.

Now assume that $D_0\neq D_1$ and let $X$ be a point in $D_1$ which is not contained in $D_0$. Then $X\in L$ and $P_1X$ contains a point $Y\neq P_1\in \C$. Consider a point $P_4'\in D_1$, not on $L$, then $P_1P_4'$ contains a point $P_4\neq P_1$ of $\C$. Since $P_4'\in D_1\setminus L$, $P_4'\in D_0$ so the line $P_4'P_0$ contains a point $P_5\neq P_1$ of $\C$.

The plane $\langle P_1,P_4',X\rangle$ is not a $\C$-plane since otherwise, the points $P_1$ and $Y$ of $\C$ would lie in two different $\C$-planes. This implies that $\langle P_1,P_4,X\rangle$ which contains the $\C$-points $P_1,P_4,Y$ contains exactly one extra point of $\C$, say $P_6$. Denote $P_1P_6\cap \Sigma_\infty$ by $P'_6$. We see that there are exactly 3 points of $D_1$ on the line $P_4'X$, namely $P_4',X$ and $P_6'$.

Now $P'_6$ is a point of $D_1$, not on $L$, so $P'_6\in D_0$. Hence, there is a point $S\neq P_0\in \C$ on the line $P_0P'_6$.

If $\langle P_4',P'_6,P_0\rangle$ is not a $\C$-plane, then, since it contains $P_0,P_5,S$ of $\C$ it contains precisely $3$ points  of $D_0$ at infinity. These are the points $P_4',P_6'$ and one other point, say $T$, which needs to be different from $X$ by our assumption that $X\notin D_0$. That implies that $T$ is not on $L$, and hence, $T\in D_1$. This is a contradiction since we have seen that the only points of $D_1$ on $P_4'X$ are $P_4',X$ and $P_6'$. Now if $\langle P_4',P_6,P_0\rangle$ is a $\C$-plane, we find $q-1$ points of $D_0$ on $P_4'X$, all of them are not on $L$. Hence, we find $q-1$ points of $D_1$ on $P_4'X$, not on $L$. This is again a contradiction since $P_4'X$ has only the points $P_4'$ and $P_6'$ of $D_1$ not on $L$.

This proves our claim that $D_0=D_1$.
Since $P_1$ was chosen arbitrarily, different from $P_0$, and $D_0=D_1$, we find that the set $D$ of directions determined by $\C$ is precisely the set $D_0$. The statement now follows from the fact that a line meets $D_0$ in $0,1,3$ or $q-1$ points.

\end{proof}

Proposition \ref{prop} shows that the set $\C$ satisfies the criteria of Theorem \ref{main}. Hence, we find the following generalisation of Result \ref{BJ}.
 \begin{theorem}\label{main2} Consider $\PG(2k,q)$, $q$ even, $q>2$, with the hyperplane at infinity denoted by $\Sigma_\infty$. Let $\C$ be a set of $q^k$ affine points, called $\mathcal{C}$-points and consider a set of planes called $\mathcal{C}$-planes which satisfies the following:
\begin{itemize}
\item[(A1)] Each $\C$-plane meets $\C$ in a $q$-arc.
\item[(A2)] Any two distinct $\C$-points lie in a unique $\C$-plane.
\item[(A3)] The affine points that are not in $\C$ lie on exactly one $\C$-plane.
\item[(A4)] Every plane which meets $\C$ in at least $3$ points either meets $\C$ in $4$ points or is a $\C$-plane.
\end{itemize}
Then there exists a Desarguesian spread $\mathcal{S}$ in $\Sigma_\infty$ such that in the Bruck-Bose plane $\mathcal{P}(\mathcal{S})\cong \PG(2,q^k)$, the $\C$-points, together with $2$ extra points on $\ell_\infty$ form a translation hyperoval in $\PG(2,q^k)$.
\end{theorem}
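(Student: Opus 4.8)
The plan is to obtain this theorem as an immediate consequence of Proposition \ref{prop} together with Theorem \ref{main}. First I would apply Proposition \ref{prop}: it shows that any set $\C$ of $q^k$ affine points equipped with a family of $\C$-planes satisfying (A1)--(A4) determines a set $D$ of exactly $q^k-1$ directions in $\Sigma_\infty$ with the property that every line of $\Sigma_\infty$ meets $D$ in $0,1,3$ or $q-1$ points. Identifying $\Sigma_\infty$ with $H_\infty$ and $\C$ with $\Q$, this says precisely that $\C$ satisfies the three standing hypotheses imposed on $\Q$ at the beginning of Section \ref{bewijs}.

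The only count that deserves a word of justification is $|D|=q^k-1$. From (A1) and (A2) one sees that no three $\C$-points are collinear, as already noted in the proof of Proposition \ref{prop}. Fixing a $\C$-point $P_0$, the $q^k-1$ lines $P_0P_i$ joining $P_0$ to the remaining $\C$-points therefore determine pairwise distinct directions; hence the set $D_0$ of these directions has size $q^k-1$, and by the main claim of Proposition \ref{prop} we have $D=D_0$.

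With the hypotheses verified, I would then invoke part (2) of Theorem \ref{main}, applied to $\Q=\C$: there exists a Desarguesian spread $\mathcal{S}$ in $\Sigma_\infty$ such that, in the Bruck-Bose plane $\mathcal{P}(\mathcal{S})\cong\PG(2,q^k)$ with $\Sigma_\infty$ corresponding to $\ell_\infty$, the $\C$-points together with two extra points on $\ell_\infty$ form a translation hyperoval. This is exactly the assertion of the theorem.

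Since Proposition \ref{prop} and Theorem \ref{main} together carry all the substance, there is no genuine obstacle remaining; the proof is simply a matter of checking that the combinatorial input required by Theorem \ref{main}---namely $q^k$ affine points, $q^k-1$ determined directions, and the line-intersection pattern $0,1,3,q-1$---is exactly what Proposition \ref{prop} supplies. The one point I would remain attentive to is the bookkeeping of the standing parameter conventions, ensuring that the range of $q$ and $k$ in the hypothesis lies within that for which Theorem \ref{main} has been established when it is invoked.
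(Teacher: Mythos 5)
Your proposal matches the paper's own proof exactly: the paper derives Theorem \ref{main2} in one line by observing that Proposition \ref{prop} supplies precisely the hypotheses of Theorem \ref{main} (the count $|D|=q^k-1$ being part of the statement of Proposition \ref{prop}) and then invoking part (2) of that theorem. Your closing caution about parameter ranges is well placed --- Theorem \ref{main} is stated for $q=2^h$ with $h\geq 4$ while Theorem \ref{main2} claims $q>2$, a discrepancy the paper itself glosses over --- but this is an issue with the paper's bookkeeping, not with your argument.
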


\end{document}